\tikzset{dummy/.style= {circle,fill,draw,inner sep=0pt,minimum size=1.2mm}}
\tikzset{vertex/.style={fill, circle, minimum size=.1cm, inner sep=0pt}}
\numberwithin{equation}{section} 
\numberwithin{figure}{section}
\newcommand{\newrefformat}[2]{}
\newcommand\restr[2]{{
  \left.\kern-\nulldelimiterspace 
  #1 
  \vphantom{\big|} 
  \right|_{#2} 
  }}
\crefname{lemma}{Lemma}{Lemmas}
\crefname{theorem}{Theorem}{Theorems}
\crefname{definition}{Definition}{Definitions}
\crefname{proposition}{Proposition}{Propositions}
\crefname{remark}{Remark}{Remarks}
\crefname{corollary}{Corollary}{Corollaries}
\crefname{equation}{Equation}{Equations}
\crefname{construction}{Construction}{Constructions}
\crefname{ex}{Example}{Examples}
\crefname{appsec}{Appendix}{Appendices}
\crefname{subsection}{Subsection}{Subsections}
\theoremstyle{plain}
\newtheorem{theorem}[equation]{Theorem}
\newtheorem{corollary}[equation]{Corollary}
\newtheorem{proposition}[equation]{Proposition}
\newtheorem{lemma}[equation]{Lemma}
\newtheorem*{theorem*}{Theorem}
\newtheorem{introtheorem}{Theorem}
\crefname{introtheorem}{Theorem}{Theorems}
\theoremstyle{definition}
\newtheorem{definition}[equation]{Definition}
\newtheorem{example}[equation]{Example}
\newtheorem{remark}[equation]{Remark}
\author[M. E. Calle]{Maxine E. Calle}             
\address{Department of Mathematics,
          University of Pennsylvania,
          Philadelphia, PA, 19104,
          USA}
\email{callem@sas.upenn.edu}
\author[D. Chan]{David Chan}
\address{Department of Mathematics,
         Michigan State University,
         East Lansing, MI, 48824
         USA}
\email{chandav2@msu.edu}
\keywords{Squares $K$-theory, $SK$-manifolds, equivariant algebraic $K$-theory, Euler characteristic}
\subjclass[2020]{
18F25, 
55P91, 
19D99
}
\title[A genuine $G$-spectrum for the cut-and-paste $K$-theory of $G$-manifolds]{A genuine $G$-spectrum for the\\cut-and-paste $K$-theory of $G$-manifolds}
\date{\today}
\begin{document}

\newcommand{\RR}{\mathbb{R}}
\newcommand{\QQ}{\mathbb{Q}}
\newcommand{\NN}{\mathbb{N}}
\newcommand{\ZZ}{\mathbb{Z}}
\newcommand{\CC}{\mathbb{C}}
\newcommand{\PP}{\mathbb{P}}
\newcommand{\HH}{\mathbb{H}}
\newcommand{\EE}{\mathbb{E}}

\newcommand{\abs}[1]{\left\lvert#1\right\rvert}
\newcommand{\ip}[2]{\left\langle #1 , #2\right\rangle}
\newcommand{\norm}[1]{\left\lvert\left\lvert#1\right\rvert\right\rvert}
\newcommand{\gen}[1]{\left\langle #1 \right\rangle}
\newcommand{\set}[1]{\left\{ #1 \right\}}
\newcommand{\cat}[1]{\mathscr{#1}}
\newcommand{\bcat}[1]{{\bf #1}}

\newcommand{\normal}{\trianglelefteq}
\newcommand{\grad}{\nabla}
\newcommand{\Loop}{\Omega}
\newcommand{\codim}{{\rm codim}}
\newcommand{\bndry}{\partial}
\newcommand{\dd}{\partial}
\newcommand{\im}{\operatorname{im}}
\newcommand{\op}{\operatorname{op}}
\newcommand{\colim}{\operatorname{colim}}
\newcommand{\rk}{\operatorname{rk}}
\newcommand{\ob}{\operatorname{Ob}}
\renewcommand{\hom}{\operatorname{Hom}}
\newcommand{\cod}{\operatorname{cod}}
\newcommand{\dom}{\operatorname{dom}}
\newcommand{\id}{\operatorname{id}}
\newcommand{\1}{\mathbf{1}}
\newcommand{\GL}{\operatorname{GL}}
\newcommand{\Alg}{\operatorname{Alg}}
\newcommand{\Map}{\operatorname{Map}}
\newcommand{\Sing}{\operatorname{Sing}}
\newcommand{\quot}{\twoheadrightarrow}
\newcommand{\cof}{\rightarrowtail}
\newcommand{\Mod}{\mathcal{M}od}
\newcommand{\Set}{\mathrm{Set}}
\newcommand{\FP}{\mathrm{FP}}
\newcommand{\Perf}{\mathrm{Perf}}
\newcommand{\fd}{\mathrm{fd}}
\newcommand{\Ab}{\mathrm{Ab}}
\newcommand{\Loc}{\mathrm{Loc}}
\newcommand{\Cat}{\mathrm{Cat}}
\newcommand{\Gpd}{\mathrm{Gpd}}
\newcommand{\Fun}{\mathrm{Fun}}
\newcommand{\Span}{\mathrm{Span}}
\newcommand{\Top}{\mathrm{Top}}
\newcommand{\Sp}{\mathrm{Sp}}
\newcommand{\Wald}{\mathrm{Wald}}
\newcommand{\wex}{\mathrm{wex}}
\newcommand{\ex}{\mathrm{ex}}
\newcommand{\cn}{\mathrm{cn}}
\newcommand{\eff}{\mathrm{eff}}
\newcommand{\pr}{\mathrm{Pr}}
\newcommand{\sSet}{\mathrm{sSet}}
\newcommand{\Coeff}{\mathrm{Coeff}}
\newcommand{\tr}{\mathrm{tr}}
\newcommand{\res}{\mathrm{res}}
\newcommand{\Perm}{\mathrm{Perm}}
\newcommand{\Seg}{\mathrm{Seg}}
\newcommand{\Mack}{\mathrm{Mack}}
\newcommand{\ps}{\mathrm{ps}}
\newcommand{\Proj}{\mathrm{Proj}}
\newcommand{\Hom}{\mathrm{Hom}}
\newcommand{\Mfld}{\mathrm{Mfld}}
\newcommand{\Mfldsq}{\mathrm{Mfld}^{G,\partial}_n}
\newcommand{\SK}{\mathrm{SK}}
\newcommand{\catsq}{\Cat^{\square}}
\newcommand{\Fin}{\mathrm{Fin}}
\newcommand{\spaces}{\cat{S}}
\newcommand{\calg}{\mathrm{Alg}_{\mathbb{E}_{\infty}}}

\begin{abstract}
Recent work has applied scissors congruence $K$-theory to study classical cut-and-paste ($SK$) invariants of manifolds.
This paper proves the conjecture that the squares $K$-theory of equivariant $SK$-manifolds arises as the fixed points of a genuine $G$-spectrum. Our method utilizes the framework of spectral Mackey functors as models for genuine $G$-spectra, and our main technical result is a general procedure for constructing spectral Mackey functors using squares $K$-theory.
\end{abstract}
\maketitle
\vspace{-1cm}

\section{Introduction}
Given two $d$-dimensional manifolds $M$ and $N$, one can ask whether it is possible to cut $M$ up into pieces and reassemble these pieces in a new way to obtain $N$. This ``cut-and-paste'' relation, also known as an \textit{$SK$-relation} (which abbreviates the German translation \textit{schneiden und kleben} for cut-and-paste), is surprisingly computable.
In \cite{KKNO73}, Karras--Kreck--Neumann--Ossa show that the $SK$-equivalence classes of a given dimension are completely classified by algebraic invariants, namely, the Euler characteristic and (in $4d$-dimensions) the signature.

In the setting of $G$-manifolds, one may similarly make sense of an equivariant $SK$-relation and study the resulting $SK$-invariants, see for example \cite{rowlett,komiya,kosniowski,hara-koshikawa}. However, in this context, the story is not as simple nor as well-understood. For instance, the equivariant Euler characteristic, while still an $SK$-invariant, is no longer a complete invariant (see \cite[Example 2.14]{SKG}). While equivariant $SK$-invariants have been completely classified for some groups (see e.g. \cite{kosniowski}) it is still an open problem in general.

The study of $SK$-invariants is similar in flavor to \textit{scissors congruence} invariants of polytopes.  Both theories fit into a framework called \textit{scissors congruence $K$-theory}, pioneered by Zakharevich \cite{Zak-perspectives}, which takes inspiration from the rich subject of the algebraic $K$-theory of rings. The connection between $SK$-invariants and scissors congruence $K$-theory was first realized in \cite{HMMRS}, where the authors construct a \textit{category with squares} ${\rm Mfld}^{\bndry}_d$ whose $K$-theory recovers $SK$-groups for manifolds with boundary as its zeroth $K$-group (see \cref{ex:SK}). The scissors congruence $K$-theory of $SK$-manifolds is obtained via a construction $K^{\square}$ called \textit{squares $K$-theory} \cite{CKMZ:squares}. 

In a recent paper \cite{SKG}, these ideas are extended to the equivariant context. The authors construct a category with squares ${\rm Mfld}^{G,\bndry}_d$ which is the equivariant analog of the one from \cite{HMMRS} and they show that $K_0^{\square}({\rm Mfld}^{G,\bndry}_d)\cong \SK_d^G$ is an $SK$-group for $G$-manifolds with boundary.
Moreover, they show that the groups $\{\SK_d^H\}_{H\leq G}$ assemble into a \emph{$G$-Mackey functor}, an equivariant algebraic structure which encodes the relationships between the groups $\SK^H_d$ as $H\leq G$ varies. Since $G$-Mackey functors naturally arise as the homotopy groups of genuine $G$-spectra, the authors of \cite{SKG} conjecture that their Mackey functor is in fact the $\pi_0$-Mackey functor of some genuine $G$-spectrum. Our main result is the affirmation of this conjecture.

\begin{introtheorem}[\cref{thm:main}]\label{introthm:main}
        There is a genuine $G$-spectrum $K^\square_G(\cat M^G_d)$ such that \[
    K^{\square}_G(\cat M^G_d)^H \simeq K^{\square}({\rm Mfld}^{H,\bndry}_d)
    \] for all $H\leq G$. Moreover, $\pi_0$ of this genuine $G$-spectrum is the Mackey functor of equivariant $SK$-groups for $d$-dimensional $G$-manifolds with boundary. 
\end{introtheorem}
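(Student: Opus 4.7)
The plan is to exploit the Guillou--May/Barwick equivalence identifying genuine $G$-spectra with spectral Mackey functors on the Burnside $\infty$-category $\Span(\Fin_G)$. Once $K^\square_G(\cat M^G_d)$ is built as such a spectral Mackey functor, the value at the orbit $G/H$ is by definition equivalent to the $H$-fixed points of the underlying genuine $G$-spectrum, so the fixed-point formula and the identification of $\pi_0$ will both come for free once the Mackey functor is constructed with the right values and the right restriction/transfer maps.

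My first step is to produce, for each finite $G$-set $T$, a category with squares $\cat M^G_d(T)$ whose objects are $G$-manifolds equipped with an equivariant reference map to $T$ (equivalently, for $T = G/H$, this is just $\Mfld^{H,\bndry}_d$ by the usual $G/H$--$H$ adjunction). A map of $G$-sets $T \to T'$ should induce a functor of categories with squares via pullback in one direction and disjoint-union-over-fibers in the other, with both of these being squares-preserving. After applying the functor $K^\square$, this should yield, for every span $T \xleftarrow{f} S \xrightarrow{g} T'$ in $\Fin_G$, a morphism of spectra $g_! \circ f^* \colon K^\square(\cat M^G_d(T)) \to K^\square(\cat M^G_d(T'))$.

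The main technical step, which is also the principal obstacle, is to assemble these data into an actual functor out of the span $\infty$-category $\Span(\Fin_G)$, rather than just a collection of pointwise maps. Concretely I need a base-change/Beck--Chevalley compatibility: given a pullback square of $G$-sets, the associated square of squares-functors should commute up to a specified natural isomorphism, and these isomorphisms should cohere. This is where the ``general procedure'' advertised in the abstract presumably lives: I would formulate it as a theorem saying that any sufficiently functorial assignment $T \mapsto (\catsq\text{-valued gadget})$ satisfying a pullback-stability condition for pullbacks in $\Fin_G$ determines, after applying $K^\square$, a spectral Mackey functor. The key computation is that pullback of $G$-manifolds along a $G$-map of orbits is both squares-exact and compatible with the induction (disjoint-union-over-fibers) construction, which ultimately reduces to the Mackey double-coset formula for $G$-sets at the level of categories with squares.

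Having constructed the spectral Mackey functor, the final two assertions are formal. The fixed-point equivalence $K^\square_G(\cat M^G_d)^H \simeq K^\square(\Mfld^{H,\bndry}_d)$ follows because evaluation at $G/H$ computes $H$-fixed points under the Guillou--May equivalence, together with the identification $\cat M^G_d(G/H)\simeq \Mfld^{H,\bndry}_d$. Taking $\pi_0$ of a spectral Mackey functor yields an ordinary Mackey functor whose value at $G/H$ is $K_0^\square(\Mfld^{H,\bndry}_d)\cong \SK^H_d$, and the restrictions/transfers match those of \cite{SKG} by construction, since both are induced by the same pullback and induction functors on $G$-manifolds. The remaining care is to check that the transfer maps I obtain at the spectrum level agree, on $\pi_0$, with the transfers in the Mackey functor of \cite{SKG}; this should be immediate from how each is defined via induction of $G$-manifolds.
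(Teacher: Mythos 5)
Your proposal follows essentially the same route as the paper: define $\cat M^G_d(T)$ for finite $G$-sets $T$ with pullback restrictions and postcomposition transfers, identify $\cat M^G_d(G/H)\simeq \Mfld^{H,\bndry}_d$, verify the Beck--Chevalley isomorphism for pullback squares of $G$-sets to extend to a product-preserving (pseudo)functor out of the span category, and then feed this into a general ``$K^\square$ of a Mackey functor of squares categories is a spectral Mackey functor'' machine. The paper implements the two steps you flag as the main obstacles via an extension theorem for pseudofunctors on span bicategories and a Duskin-nerve argument making $K^\square$ into a product-preserving $\infty$-functor, but the decomposition of the argument is the one you describe.
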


Among other things, the theorem implies that the higher cut-and-paste $K$-theory groups of $G$-manifolds also assemble into Mackey functors, generalizing the results of \cite{SKG}. These higher cut-and-paste $K$-theory groups are related to cobordism and other invariants of manifolds \cite{MRS}.

The proof of \cref{introthm:main} relies on the framework of ($\infty$-categorical) spectral Mackey functors as a model for genuine $G$-spectra \cite{barwick_spectral_2017, barwick_spectral_2020} (see also \cite{GuillouMay}). The essential idea is to construct a Mackey functor in categories with squares and then apply $K^{\square}$ levelwise to obtain a spectral Mackey functor, in the spirit of \cite{BohmannOsorno1,barwick_spectral_2017, barwick_spectral_2020, MalkiewichMerling:Atheory,linearization}. 
Our main technical result is the following, which is explained in more detail in \cref{sec:constructing SpMacks}. We note that we take $\infty$-category to mean quasicategory.

\begin{introtheorem}[\cref{thm:KT machine,corollary: from mackey functor of squares cats to spectral Mackey}]\label{introthm:sq G-KT}
    Squares $K$-theory extends to a product-preserving $\infty$-functor from an $\infty$-category of categories with squares to spectra.  In particular, every Mackey functor in the $\infty$-category of squares categories determines a spectral Mackey functor.
\end{introtheorem}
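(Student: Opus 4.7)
The plan is to first promote $K^\square$ to an $\infty$-functor defined on a suitable $\infty$-category of categories with squares, then verify that it preserves finite products, and finally deduce the Mackey functor statement by a formal argument using Barwick's model. To begin, I would organize categories with squares into an $\infty$-category $\catsq_\infty$, either by taking the coherent nerve of an appropriate simplicial enrichment of the 1-category $\catsq$ or by Dwyer--Kan localizing at the class of squares functors that induce equivalences on $K^\square$. In either model, finite products in $\catsq_\infty$ are computed by the strict categorical product of squares categories; concretely, a square in $\cat{C}\times \cat{D}$ is simply a pair consisting of a square in $\cat{C}$ and a square in $\cat{D}$.

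Next, I would argue that the construction $K^\square$ of \cite{CKMZ:squares} extends to an $\infty$-functor $K^\square : \catsq_\infty \to \Sp$. By inspection, $K^\square$ is assembled from a Waldhausen-style simplicial construction applied to the squares, followed by (spectral) group completion, each of which is homotopy-invariant and lifts to an $\infty$-functor. The central technical step, and the one I expect to be the main obstacle, is showing that this functor preserves finite products. At the combinatorial level this is straightforward: the simplicial object of squares in $\cat{C}\times\cat{D}$ splits as the product of the corresponding simplicial objects in $\cat{C}$ and $\cat{D}$. The work lies in checking that passage through geometric realization and group completion preserves this product up to stable equivalence, cf.\ the analogous arguments for Waldhausen $K$-theory in \cite{BohmannOsorno1} and for $A$-theory in \cite{MalkiewichMerling:Atheory}, which should transport to the squares setting with only modest modifications.

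Finally, for the Mackey functor statement, I would invoke Barwick's model of genuine $G$-spectra as product-preserving $\infty$-functors $\Span(\Fin_G)\to \Sp$ \cite{barwick_spectral_2017, barwick_spectral_2020}. A Mackey functor valued in $\catsq_\infty$ is by definition a product-preserving $\infty$-functor $M : \Span(\Fin_G) \to \catsq_\infty$; post-composing with the product-preserving $K^\square$ then yields a product-preserving functor $K^\square \circ M : \Span(\Fin_G) \to \Sp$, that is, a spectral Mackey functor. This $K$-theoretic assembly pattern is precisely the strategy used in \cite{BohmannOsorno1, barwick_spectral_2017, linearization}, so once product-preservation of $K^\square$ is established, the corollary about Mackey functors follows essentially formally.
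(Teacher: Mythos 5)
Your first option for building $\catsq_\infty$ --- the coherent nerve of a simplicial enrichment of $\catsq$ --- together with the formal post-composition argument at the end is exactly the paper's route (\cref{thm:KT machine} and \cref{corollary: from mackey functor of squares cats to spectral Mackey}), so the outline is sound; but your calibration of where the work lies is inverted. The step you wave through (``$K^\square$ is homotopy-invariant and lifts to an $\infty$-functor by inspection'') is the real content: one must fix the enrichment of $\catsq$ (the paper takes natural \emph{isomorphisms} of square functors as $2$-cells, making $\catsq$ a $(2,1)$-category whose coherent nerve is its Duskin nerve) and then prove that $K^\square$ sends such a natural isomorphism to a homotopy \emph{through maps of grouplike $\mathbb{E}_\infty$-spaces} (\cref{proposition: K square is simplicial} and \cref{lemma: K square factors through spectra}), so that the induced simplicial functor lands in the locally Kan simplicial category modelling $\Alg_{\mathbb{E}_\infty}^{\mathrm{grp}}(\spaces)\simeq\Sp^{\geq 0}$. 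Conversely, the step you flag as the main obstacle --- product preservation --- is the easy part: it may be checked on homotopy categories, where it holds because $|T^\square_\bullet(\cat C\times\cat D)|\cong|T^\square_\bullet\cat C|\times|T^\square_\bullet\cat D|$ on the nose and $\Omega_O$ preserves products; there is no separate group-completion stage to control, since the cocartesian monoidal hypothesis already makes $K^\square(\cat C)=\Omega_O|T^\square_\bullet\cat C|$ grouplike. Finally, drop the Dwyer--Kan localization alternative: the intended inputs are pseudofunctors into the $(2,1)$-category $\catsq$, so the $\infty$-category you need is its Duskin nerve, not a localization defined in terms of the very functor you are trying to construct.
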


With this $K^{\square}$-theory machine in hand, the bulk of the work that goes into \cref{introthm:main} is to construct a Mackey functor of squares categories. We note that while the statement of \cref{introthm:sq G-KT} uses the language of $\infty$-categories, our construction of the main example is implemented using $2$-categories.  This is done in \cref{sec:SKG example}, although we emphasize that many of the main ideas were already introduced in \cite[Section 3]{SKG}. 

The second part of the conjecture from \cite{SKG} concerns the equivariant Euler characteristic, which is the primary example of an equivariant $SK$-invariant of $G$-manifolds. In contrast to the non-equivariant case, it is still an open question which $SK$-invariants completely classify equivariant cut-and-paste manifolds, although the answer is known when $G$ is a finite Abelian group of odd order \cite{kosniowski}. In \cite[Theorem B]{SKG}, the authors show that there is a map of $K$-theory spectra which recovers the equivariant Euler characteristic on $\pi_0$, and they conjecture that this map should arise as the fixed points of some map of genuine $G$-spectra. In \cref{sec:Euler char}, we show that this is indeed the case. In the theorem below, $K_G(\underline{\ZZ})$ denotes the equivariant algebraic $K$-theory of the constant coefficient system on $\ZZ$, as defined in \cite{linearization}. 

\begin{introtheorem}[\cref{theorem: lift euler char}]\label{introthm:linearization}
    There is a map of genuine $G$-spectra\[
    K_G^{\square}(\cat M_d^G) \to K_G(\underline{\ZZ})
    \] which on $\pi_0^H$ recovers the $H$-equivariant Euler characteristic, for $H\leq G$. 
\end{introtheorem}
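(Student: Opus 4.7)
The plan is to construct the map at the level of Mackey functors in squares categories and then apply the $K^{\square}$-theory machine from \cref{introthm:sq G-KT}. Since the source $K^{\square}_G(\cat M_d^G)$ is obtained by applying this machine to the Mackey functor $\cat M_d^G$ built in \cref{sec:SKG example}, it suffices to realize the target $K_G(\underline{\ZZ})$ in the same way and to produce a suitable morphism between the two squares-categorical Mackey functors.

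First I would build a Mackey functor $\underline{\Perf}_{\ZZ}$ in squares $\infty$-categories whose value at $H \leq G$ is a category of perfect complexes of $\ZZ[H]$-modules with the squares structure coming from short exact sequences. Restriction along $K \leq H$ is restriction of scalars, and transfer is $\ZZ[H] \otimes_{\ZZ[K]} -$; the Mackey identities reduce to standard base-change/double-coset formulas for induction and restriction. Applying $K^{\square}$ levelwise and comparing with the construction of \cite{linearization} identifies the resulting spectral Mackey functor with $K_G(\underline{\ZZ})$.

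Next I would construct a morphism $\cat M_d^G \to \underline{\Perf}_{\ZZ}$ whose component at $H$ is the squares functor sending an $H$-manifold with boundary to a model of its cellular chain complex as a perfect $\ZZ[H]$-module complex; this is the equivariant analog of the functor used in \cite{SKG} to lift the classical Euler characteristic to $K$-theory. Compatibility with restriction is immediate. Compatibility with transfer follows from the natural isomorphism $C_{\ast}(H \times_K M; \ZZ) \cong \ZZ[H] \otimes_{\ZZ[K]} C_{\ast}(M; \ZZ)$ for a $K$-manifold $M$. Applying \cref{introthm:sq G-KT} then yields the desired map of genuine $G$-spectra, and on $\pi_0^H$ it sends $[M]$ to $\sum_i (-1)^i [C_i(M;\ZZ)] \in K_0(\ZZ[H]) \cong \pi_0^H K_G(\underline{\ZZ})$, which is the $H$-equivariant Euler characteristic.

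The main obstacle is coherence: upgrading the componentwise chain-complex functor to an honest morphism of Mackey functors in squares $\infty$-categories, rather than just a collection of compatible squares functors. The $2$-categorical techniques used for $\cat M_d^G$ in \cref{sec:SKG example} should carry over, but verifying the Beck--Chevalley identities that govern the interaction of chains with induction of $G$-manifolds requires a careful pullback analysis. A related subtlety, essential for the statement rather than just the existence of \emph{some} map, is matching the spectral Mackey functor produced from $\underline{\Perf}_{\ZZ}$ with the specific model of \cite{linearization} for $K_G(\underline{\ZZ})$; this comparison of two distinct presentations of equivariant algebraic $K$-theory is the part of the argument that is not purely formal.
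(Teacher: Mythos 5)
Your route differs from the paper's in an essential way, and the difference is exactly where the gap lies. The component functors of your proposed morphism $\cat M_d^G \to \underline{\Perf}_{\ZZ}$ do not obviously exist: there is no functorial assignment of a perfect (e.g.\ cellular) $\ZZ[H]$-chain complex to an $H$-manifold on the category whose morphisms are $SK$-embeddings. A cellular model requires a choice of ($G$-)CW structure, and smooth embeddings do not carry one chosen structure to another; singular chains are functorial but not perfect; and even granting a pointwise choice, you would need the squares functor condition that a pushout of manifolds along $SK$-embeddings goes to a (homotopy) pushout of complexes \emph{in your point-set model}, which is a Mayer--Vietoris statement that again depends on compatible cell structures. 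So the issue is not merely the coherence you flag at the end (upgrading compatible squares functors to a morphism of Mackey functors); the individual squares functors are already not defined. The second obstacle you flag --- matching your ad hoc $\underline{\Perf}_{\ZZ}$ with the specific model of $K_G(\underline{\ZZ})$ from \cite{linearization}, including identifying $\pi_0^H$ with the target of the $H$-equivariant Euler characteristic of \cite{SKG} --- is likewise a substantial unproven comparison, not a routine check.

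The paper sidesteps both problems by factoring through equivariant $A$-theory. It builds a Mackey functor of Waldhausen (hence squares) categories $X\mapsto \cat R^G(X)$ of retractive $G$-spaces over finite $G$-sets, shows via the Beck--Chevalley criterion that this extends to a product-preserving pseudofunctor on $\mathbb{B}^G$, and identifies the resulting spectrum with $A_G(*)$. The pseudonatural transformation is the strictly functorial assignment $(M, f\colon M\to X)\mapsto (M\amalg X\to X)$, which tautologically preserves pushouts and cofibrations, so no chain-level choices are needed. The passage to $K_G(\underline{\ZZ})$ is then obtained by post-composing with the already-constructed map of genuine $G$-spectra $L\colon A_G(*)\to K_G(\underline{\ZZ})$ of \cite{linearization}, which packages all of the homotopy-coherent linear algebra once and for all; and the identification with the $H$-equivariant Euler characteristic on $\pi_0^H$ is immediate because the map of \cite[Theorem B]{SKG} is defined through the same factorization. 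If you want to salvage your direct approach, you would essentially have to reconstruct this linearization machinery, so the detour through $A_G(*)$ is not optional decoration but the mechanism that makes the proof work.
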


The paper is organized as follows. In \cref{sec:sq KT}, we recall the basics of squares $K$-theory and categories with squares. In \cref{sec:SpMacks}, we review the necessary definitions and results about Mackey functors that we need to use to construct our equivariant $K^{\square}$-theory functor (which is done in \cref{sec:constructing SpMacks}). \cref{sec:SKG example} contains the proof of \cref{introthm:main} and \cref{sec:Euler char} contains that of \cref{introthm:linearization}.

\subsection*{Acknowledgments}
We are grateful to the authors of \cite{SKG} for fruitful conversations that led to the contents of this paper. We thank Liam Keenan, Teena Gerhardt, and Maximilien P\'eroux for helpful comments on an early draft, as well as the anonymous referee for their comments which greatly improved the paper.
The first-named author was partially supported by NSF grant DGE-1845298 and the second-named author was partially supported by NSF grant DMS-2135960. 
\section{Squares $K$-theory}\label{sec:sq KT}

In this section, we review the basic definitions of squares $K$-theory. We note that the categories we consider are the categories with squares from \cite{HMMRS}, which are not as general as the squares categories of \cite{CKMZ:squares}. 

\begin{definition}\label{defn:cat w squares}
A \textit{category with squares} consists of a category $\cat C$ with coproducts, a chosen distinguished object $O$, and two subcategories $\cat C^h$ and $\cat C^v$ of horizontal ($\rightarrowtail$) and vertical ($\twoheadrightarrow$) morphisms along with a collection of distinguished squares
\[
    \begin{tikzcd}
    A \ar[r, >->] \ar[d, ->>] \ar[dr, phantom, "\square"] & B \ar[d, ->>]\\
    C \ar[r, >->]& D
    \end{tikzcd}
\] 
which satisfy the following conditions:\begin{enumerate}
\item[(i)] distinguished squares are closed under coproducts,
    \[
        \text{if }~~~ \begin{tikzcd}
            A \ar[r, >->] \ar[d, ->>]  \ar[dr, phantom, "\square"]& B \ar[d, ->>]\\
            C \ar[r, >->]& D
            \end{tikzcd} ~~~\text{ and }~~~   \begin{tikzcd}
            A' \ar[dr, phantom, "\square"] \ar[r, >->] \ar[d, ->>] & B' \ar[d, ->>]\\
            C' \ar[r, >->]& D'
            \end{tikzcd}, ~~~\text{ then }~~~ \begin{tikzcd}
            A\coprod A' \ar[dr, phantom, "\square"] \ar[r, >->] \ar[d, ->>] & B\coprod B' \ar[d, ->>]\\
            C\coprod C' \ar[r, >->]& D\coprod D'
        \end{tikzcd};
    \]
\item[(ii)] distinguished squares are commutative in $\cat C$ and can be composed vertically and horizontally,
    \[
        \text{horizontally: if }~~~ \begin{tikzcd}
            A \ar[r, >->] \ar[d, ->>]  \ar[dr, phantom, "\square"]& B \ar[d, ->>]\\
            C \ar[r, >->]& D
            \end{tikzcd} ~~~\text{ and }~~~   \begin{tikzcd}
            B \ar[dr, phantom, "\square"] \ar[r, >->] \ar[d, ->>] & B' \ar[d, ->>]\\
            D \ar[r, >->]& D'
            \end{tikzcd}, ~~~\text{ then }~~~ \begin{tikzcd}
            A \ar[r, >->] \ar[d, ->>]\ar[drr, phantom, "\square"] & B \ar[r, >->] & B' \ar[d, ->>]\\
            C \ar[r, >->]& D\ar[r, >->] & D'
        \end{tikzcd};
    \]
    \[
        \text{vertically: if }~~~ \begin{tikzcd}
            A \ar[r, >->] \ar[d, ->>]  \ar[dr, phantom, "\square"]& B \ar[d, ->>]\\
            C \ar[r, >->]& D
            \end{tikzcd} ~~~\text{ and }~~~   \begin{tikzcd}
            C \ar[dr, phantom, "\square"] \ar[r, >->] \ar[d, ->>] & D \ar[d, ->>]\\
            C' \ar[r, >->]& D'
            \end{tikzcd}, ~~~\text{ then }~~~ \begin{tikzcd}
            A \ar[r, >->] \ar[d, ->>]\ar[ddr, phantom, "\square"] & B \ar[d, ->>] \\
            C \ar[d, ->>] & D\ar[d, ->>] \\
            C' \ar[r, >->] & D'
        \end{tikzcd};
    \]
\item[(iii)] the subcategory $iso\cat C$ of isomorphisms ($\xrightarrow{\cong}$) is contained in both $\cat C^h$ and $\cat C^v$;
\item[(iv)] all commutative squares of the form\[
    \begin{tikzcd}
        A \ar[r, >->] \ar[d, ->>, swap, "\cong"] & B \ar[d, ->>, "\cong"]\\
        C \ar[r, >->]& D
        \end{tikzcd} ~\text{ and }~ \begin{tikzcd}
        A \ar[r, >->, "\cong"] \ar[d, ->>]  & B \ar[d, ->>]\\
        C \ar[r, >->, swap, "\cong"]& D
    \end{tikzcd}
    \] are distinguished;
\item[(v)] the object $O$ is initial in both $\cat C^h$ and $\cat C^v$.
\end{enumerate}
\end{definition}

\begin{definition}
    A category with squares is \textit{cocartesian monoidal} if the distinguished object $O$ is the unit for the coproduct and for every pair of objects $A,B\in \cat C$, the squares
\[
    \begin{tikzcd}
        O \ar[r, >->] \ar[d, ->>] \ar[dr, phantom, "\square"] & B \ar[d, ->>]\\
        A \ar[r, >->]& A\amalg B
        \end{tikzcd} ~\text{ and }~ \begin{tikzcd}
        O \ar[r, >->] \ar[d, ->>] \ar[dr, phantom, "\square"] & A \ar[d, ->>]\\
        B \ar[r, >->]& A\amalg B
    \end{tikzcd}
\]
are distinguished.
\end{definition}

All of the categories with squares that we consider in this paper are cocartesian monoidal, and so we will often drop the prefix and just say ``category with squares.'' We impose this condition so that the resulting squares $K$-theory space has some desirable properties (see \cref{rmk:extra conditions}).

\begin{definition}
    Given a category with squares $\cat C$, let $T_n \cat C$ denote the category whose objects are length $n$ sequences of composable horizontal morphisms in $\cat C$
    \[c_{0}\cof c_{1}\cof c_2\cof \dots\cof c_{n}\]
    and whose morphisms are natural transformations of diagrams in $\cat C$.     These categories assemble into a simplicial category $T_\bullet\cat C$, where all faces and degeneracies are like those in a nerve. Let $T^{\square}_\bullet \cat C\subseteq T_\bullet \cat C$ denote the sub-simplicial category whose morphisms are those natural transformations of diagrams which are valued in distinguished squares. That is, a morphism in $T^{\square}_n \cat C$ is a diagram of the form
    \[\begin{tikzcd}
        c_{0}\rar[>->] \dar[->>]\ar[rd, phantom, "\square"]  & c_{1}\rar[>->] \dar[->>]\ar[rd, phantom, "\square"]  & c_{2}\rar[>->] \dar[->>] \ar[rd, phantom, "\square"]& \cdots  \rar[>->]\ar[rd, phantom, "\square"]& c_{n}\dar[->>]\\
        c'_{0}\rar[>->]  & c'_{1}\rar[>->]& c'_{2}\rar[>->]& \cdots  \rar[>->]& c'_{n}
    \end{tikzcd}.\]
\end{definition}

\begin{definition}\label{defn:T-dot}
The \textit{$K$-theory space} of a category with squares $\cat C$ is \[
K^\square(\cat C) = \Omega_O \abs{T^{\square}_\bullet(\cat C)}
,
\] and the \textit{$K$-groups} of $\cat C$ are the homotopy groups of $K^\square(\cat C)$ \[K_i^\square (\cat C) = \pi_i(K^\square(\cat C)).\]
\end{definition}

\begin{remark}\label{rmk:extra conditions}
    The cocartesian monoidal condition on $\cat C$ implies that $K^{\square}(\cat C)$ has the following desirable properties: First, the connected components have the expected description as \[
    K_0^\square(\cat C) \cong \ZZ[{\rm Ob}\cat C]/\sim
    \] where $\sim$ is generated by $[O]=0$ and $[A]+[D] = [B]+[C]$ for every distinguished square\[\begin{tikzcd}
    A \ar[r, >->] \ar[d, ->>] \ar[dr, phantom, "\square"] & B \ar[d, ->>]\\
C \ar[r, >->]& D\end{tikzcd},
    \] see \cite[Theorem 3.1]{CKMZ:squares}; second, $K^{\square}(\cat C)$ is a grouplike $\mathbb{E}_{\infty}$-space \cite[Theorem 2.5]{CKMZ:squares}. Implicitly, here and in the remainder of the paper, we fix the $\mathbb{E}_{\infty}$-operad $\mathcal{O}$ considered in \cite{CKMZ:squares} which $K^{\square}(\cat C)$ is an algebra over.  Whenever we say $\mathbb{E}_{\infty}$-space, we mean an $\mathcal{O}$-algebra.
\end{remark}

\begin{example}\label{exmaple: wald is square}
    The prototypical examples of squares categories are Waldhausen categories. Given a Waldhausen category $\cat C$, we give it a category with squares structure by declaring the distinguished object $O$ to be the zero object of $\cat{C}$, the horizontal morphisms $\cat{C}^h$ to be cofibrations, and the vertical morphisms $\cat{C}^v$ to be all morphisms. The distinguished squares are those squares \[\begin{tikzcd}
    A \ar[r, >->] \ar[d] \ar[dr, phantom, "\square"] & B \ar[d]\\ C \ar[r, >->]& D\end{tikzcd},
    \] so that the natural map $B\cup_A C\to D$ is a weak equivalence in $\cat C$. In this case, $T^{\square}_\bullet \cat C$ is known as the \textit{Thomason construction} and Waldhausen shows in \cite[page 334]{waldhausen} that $K^{\square}(\cat C)\simeq K(\cat C)$. 
\end{example}

A relevant example for our purposes is that of $SK$-manifolds. We review the non-equivariant example below, following \cite{HMMRS}, and recall the equivariant version from \cite{SKG} later in \cref{sec:SKG example}. Further examples of categories with squares may be found, e.g. in \cite{CKMZ:squares} and \cite{callesarazola}.

\begin{example}\label{ex:SK}
The main construction of \cite{HMMRS}, is a cocartesian monoidal category with squares ${\rm Mfld}_d^{\bndry}$. The objects of ${\rm Mfld}_d^{\bndry}$ are compact, orientable $d$-manifolds with boundary.  Both the horizontal and vertical morphisms in ${\rm Mfld}_d^{\bndry}$ are \textit{SK-embeddings}, which are smooth embeddings with an additional condition on the boundary (see \cite[Definition 4.1]{HMMRS} or \cite[Definition 4.4]{SKG} with $G=e$). The cocartesian monoidal structure is disjoint union, the distinguished object is the empty manifold $\varnothing$, and squares are pushout squares. The $K_0^{\square}$-group of  ${\rm Mfld}_d^{\bndry}$ is an $SK$-group for manifolds with boundary, which encodes how manifolds of a fixed dimension may be ``cut up'' and ``pasted'' back together, in the spirit of \cite{KKNO73}. 
\end{example}

The $K$-theory of squares categories is functorial in \textit{square functors}, which are those functors $F\colon \cat C\to \cat D$ which are strong monoidal for the cocartesian moniodal structures and preserve the relevant structure, i.e. the distinguished object and the squares. We let $\catsq$ denote the category of (cocartesian monoidal) categories with squares and square functors. 

\begin{definition}
    A \textit{natural isomorphism of square functors} is a natural isomorphism $\eta\colon F\Rightarrow F'$ which respects the coproduct structure. That is, for $F,F'\colon \cat C\to \cat D$ and any two objects $A,B\in \cat C$, the following diagram commutes:\[\begin{tikzcd}
        F(A) \amalg F(B) \ar[r, "\eta_A \amalg \eta_B"] \ar[d, 
       swap,  "\cong"]& F'(A) \amalg F'(B) \ar[d, "\cong"]\\
        F(A\amalg B) \ar[r, swap, "\eta_{A\amalg B}"] & F'(A\amalg B)
    \end{tikzcd}\]
   where the unlabeled maps come from the assumption that $F$ and $F'$ are strong symmetric monoidal.  Note that this condition is equivalent to saying that $\eta$ is a monoidal natural isomorphism. 
\end{definition}

\begin{remark}
    To define natural transformations of squares functors more generally, one needs to specify either a horizontal or vertical direction (see, e.g., \cite[Definition 2.3]{callesarazola}). Since we only consider natural isomorphisms in this paper, this choice is not needed as we have assumed that the isomorphisms in squares categories are both horizontal and vertical morphisms.
\end{remark}

\begin{remark}
    We can give $\catsq$ the structure of a $(2,1)$-category, where the morphisms in $\catsq(\cat C, \cat D)$ are natural isomorphisms of square functors. Consequently, we may do a change of enrichment along the nerve to obtain a simplicially enriched category of categories with squares, which we denote by $\catsq_{\Delta}$. 

    Similarly, the category of simplicial categories, denoted $\mathrm{sCat}$, is a $(2,1)$-category whose homcategories consist of simplicial functors and simplicial natural isomorphisms. We let $\mathrm{sSym}$ denote the $(2,1)$-category of simplicial symmetric monoidal categories, strong monoidal functors, and monoidal natural isomorphisms. There is an forgetful functor $\mathrm{sSym}\hookrightarrow \mathrm{sCat}$ which forgets the symmetric monoidal structure. 
    Changing enrichment as above results in simplicially enriched forgetful functor $\mathrm{sSym}_{\Delta} \hookrightarrow \mathrm{sCat}_{\Delta}$. 
\end{remark}

Let $\Top_{\Delta}$ denote the simplicially enriched category of topological spaces, with $\Top_{\Delta}(X,Y) = {\rm Sing}({\rm Map}(X,Y))$. Note that since $T_\bullet^{\square}\colon \catsq\to \mathrm{sCat}$ is a strict $2$-functor (as it is essentially a nerve construction), it extends to a simplicially enriched functor $\catsq_{\Delta} \to \mathrm{sCat}_{\Delta}$. The following result is then immediate from the fact that geometric realization $\abs{-}\colon s\Cat_{\Delta}\to \Top_{\Delta}$ is simplicially enriched. 

\begin{proposition}\label{proposition: K square is simplicial}
    The functor $T^{\square}_\bullet \colon \catsq\to s\Cat$ extends to a simplicially enriched functor $\catsq_{\Delta}\to s\Cat_{\Delta}$. Consequently, $K^{\square}\colon \catsq_{\Delta}\to \Top_{\Delta}$ is also a simplicially enriched functor.
\end{proposition}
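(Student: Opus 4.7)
The plan is to unpack the simplicial enrichments on both sides and check that $T_\bullet^\square$ respects them; the key observation is that the enrichment on $\catsq_\Delta$ is obtained by change of enrichment along the nerve from the $(2,1)$-categorical structure on $\catsq$. Thus it suffices to promote $T_\bullet^\square$ to a $(2,1)$-functor $\catsq \to s\Cat$, where $s\Cat$ is equipped with the analogous $(2,1)$-categorical structure whose $2$-cells are levelwise natural isomorphisms of simplicial functors.

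First I would verify that a square functor $F\colon \cat C \to \cat D$ induces a simplicial functor $T_\bullet^\square(F) \colon T_\bullet^\square \cat C \to T_\bullet^\square \cat D$; this is essentially built into the definition, since $F$ preserves the distinguished object, horizontal and vertical morphisms, and distinguished squares. Next, for a natural isomorphism $\alpha \colon F \Rightarrow G$ of square functors, I would construct a natural isomorphism $T_n^\square(\alpha) \colon T_n^\square(F) \Rightarrow T_n^\square(G)$ at each simplicial level whose component at a diagram is the pointwise application of $\alpha$. To check that this lands in $T_n^\square(\cat D)$, note that the components of $\alpha$ are isomorphisms and hence lie in both $\cat D^h$ and $\cat D^v$ by axiom (iii), while the squares generated by applying $\alpha$ to a horizontal morphism are distinguished by axiom (iv). Compatibility with face and degeneracy maps is automatic since these are defined levelwise and $\alpha$ is natural.

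With the $(2,1)$-functoriality established, applying the nerve $N \colon \Cat \to \sSet$ hom-wise — which is product-preserving and hence gives a change-of-enrichment functor — produces the desired simplicially enriched lift $T_\bullet^\square \colon \catsq_\Delta \to s\Cat_\Delta$, and composition is respected since the nerve sends vertical composition of natural isomorphisms to simplicial composition. The consequence for $K^\square$ then follows by composing with the simplicially enriched geometric realization $\abs{-}$ and noting that $\Omega_O$ is a simplicially enriched construction on the based subcategory in which the image lands, since square functors preserve the distinguished object $O$.

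The main obstacle is purely organizational: carefully tracking the simplicial enrichments at each stage and ensuring they glue correctly through the nerve, realization, and based-loop constructions. There is no substantive mathematical difficulty here, in line with the authors' description of the result as immediate.
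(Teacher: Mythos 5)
Your proposal is correct and follows exactly the route the paper intends: the paper offers no written proof beyond remarking that the result is immediate from $T^{\square}_\bullet$ being a nerve-type construction and geometric realization being simplicially enriched, and your argument is precisely the careful unpacking of that remark (including the right use of axioms (iii) and (iv) to see that a natural isomorphism of square functors gives levelwise morphisms in $T^{\square}_n\cat{D}$).
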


In fact, the cocartesian monoidal condition on the objects of $\catsq$ and the requirement that squares functors and natural transformations preserve coproducts ensures that the simplicially enriched functor $T_\bullet^{\square}\colon \catsq_\Delta\to \mathrm{sCat}_{\Delta}$ factors through the forgetful functor $\mathrm{sSym}_{\Delta}\rightarrow \mathrm{sCat}_{\Delta}$. A consequence of this observation is the following lemma. 

\begin{lemma}\label{lemma: K square factors through spectra}
    The simplicially enriched functor $K^{\square}\colon \catsq_{\Delta}\to \Top_{\Delta}$ factors through $\mathrm{Alg}^{\mathrm{grp}}_{\mathbb{E}_{\infty}}(\mathrm{Top})_{\Delta}$, the simplicial category of grouplike $\mathbb{E}_{\infty}$-spaces.
\end{lemma}\begin{proof}
It suffices to show that $K^{\square}$ factors through $\Bbb{E}_\infty$-spaces (without the grouplike condition), as each $K^{\square}(\cat C)$ is a grouplike $\Bbb{E}_{\infty}$-space for all $\cat C\in \catsq$ by \cref{rmk:extra conditions}. We may factor $K^{\square}$ as the composite of simplicially enriched functors\[
\catsq_{\Delta} \xrightarrow{T_\bullet^{\square}} (\mathrm{sCat}_*)_{\Delta} \xrightarrow{sB} (\mathrm{sTop}_*)_{\Delta} \xrightarrow{\abs{-}} (\Top_*)_{\Delta} \xrightarrow{\Omega} (\Top_*)_{\Delta},
\] where where the subscript $*$ denotes pointed objects and $sB$ denotes the levelwise application of the classifying space functor. As just observed, $T_\bullet^{\square}$ actually factors through $\mathrm{sSym}_{\Delta}$, and so it suffices to show that the composite $\Omega \circ \abs{-}\circ sB$ sends $\mathrm{sSym}_{\Delta}$ to the simplicial category of $\Bbb{E}_\infty$-spaces. The key result is that the classifying space functor sends monoidal functors to maps of $\Bbb{E}_{\infty}$-spaces and monoidal transformations to homotopies through maps of $\Bbb{E}_{\infty}$-spaces \cite[(2.9)]{Thomason:82}. The claim then follows from the fact that $\abs{-}$ sends simplicial $\Bbb{E}_{\infty}$-spaces to $\Bbb{E}_{\infty}$-spaces and $\Omega$ preserves $\Bbb{E}_{\infty}$-spaces.
\end{proof}

\begin{remark}
    The lemma above says that a length $n$ sequence $F_0 \xRightarrow{\eta_1} \dots \xRightarrow{\eta_n} F_n$ in $\catsq(\cat C, \cat D)$ determines a map of $\Bbb{E}_{\infty}$-spaces $K^{\square}(\cat C)\otimes \Delta^n \to K^{\square}(\cat D)$, where $\otimes$ denotes the tensoring of of an $\mathbb{E}_{\infty}$-space with a space.  In particular, for $n=1$, if $\eta\colon F\Rightarrow F'$ is a natural isomorphism of square functors then $K^{\square}(\eta)$ is a homotopy between $K^{\square}(F)$ and $K^{\square}(F')$ through maps of grouplike $\mathbb{E}_{\infty}$-spaces. 
\end{remark}

\section{Spectral Mackey functors}\label{sec:SpMacks}

In this section we review spectral Mackey functors as a model for genuine $G$-spectra. We make use of the $\infty$-categorical model of Barwick \cite{barwick_spectral_2017, barwick_spectral_2020}, but this could also be done using the model theoretic approach of Guillou--May \cite{GuillouMay}. Throughout this section, we fix a finite group $G$ and write $\Fin^G$ for the category of finite $G$-sets and equivariant functions.

\begin{definition}
    The \emph{Burnside bicategory} $\mathbb{B}^G$ is the bicategory $\Span(\Fin^G)$.  Explicitly, the objects of this bicategory are finite $G$-sets, the objects of $\mathbb{B}^G(X,Y)$ are spans
    \[
        \omega = [X\leftarrow A\rightarrow Y]
    \]
    and a morphism $\omega\to \omega'$ in $\mathbb{B}^G(X,Y)$ is a commutative diagram
    \[
    \begin{tikzcd}[row sep = small]
         & A  \ar[dl] \ar[dr] \ar[dd]&\\
         X & & Y\\
         & A' \ar[ur] \ar[ul] & 
    \end{tikzcd}.
    \]
    We will write $\mathbb{B}^G_{(2,1)}\subseteq \mathbb{B}^G$ for the maximal $(2,1)$-bicategory; that is, $\mathbb{B}^G_{(2,1)}$ has the same objects and $1$-cells as $\mathbb{B}^G$ but only the $2$-cells which are isomorphisms.
\end{definition}
\begin{remark}\label{remark: totally ordered G-sets}
    By carefully picking a model of finite $G$-sets, one can make $\mathbb{B}^G$ into a strict $2$-category.  Namely, if one defines a finite $G$-set to be a a pair $(n,\alpha)$ where $n\in \mathbb{Z}_{\geq 0}$ and $\alpha\colon G\to \Sigma_n$ a group homomorphism, then one can make choices of pullbacks which make the horizontal composition in $\mathbb{B}^G$ strictly associative.  To our knowledge, this process cannot be done in such a way that the horizontal composition is strictly unital, but this can be corrected by adding an additional object to $\mathbb{B}^G$ which is isomorphic to $(0,G\xrightarrow{!}\Sigma_0)$ and serves as a strict unit.  We refer the reader to \cite[Definition 7.2]{BohmannOsorno1} for details regarding the correct choice of pullbacks and \cite[Appendix A]{GuillouMay} for details on the addition of a strict unit.  For the remainder of this paper we will suppose that we have made the correct choices and added a strict unit so that $\mathbb{B}^G$ is a strict $2$-category.
\end{remark}

Recall the Duskin nerve, first constructed in \cite{Duskin}, is a standard functorial construction which produces a simplicial set $N_*^D(\cat{B})$ from a $2$-category $\cat{B}$. In more detail, the Duskin nerve is the simplicial set obtained by first viewing a (locally small) $2$-category $\cat B$ as a simplicial category $\cat B_{\Delta}$ by changing enrichment along the ordinary nerve functor $N_*\colon \Cat\to \sSet$, and then passing from simplicial categories to simplicial sets using the homotopy coherent nerve. The resulting simplicial set $N^{hc}_*(\cat{B}_{\Delta})$ is the Duskin nerve $N_*^D(\cat{B})$.


\begin{remark}
    The Duskin nerve can be defined for any bicategory, not just strict $2$-categories.  In this greater level of generality, the construction described above is inadequate because we cannot change enrichment along the ordinary nerve functor.  Nevertheless, when the input bicategory is actually a strict $2$-category, the two resulting simplicial sets are isomorphic \cite[\href{https://kerodon.net/tag/00KY}{Tag 00KY}]{kerodon}.
\end{remark}

\begin{lemma}
    When $\cat{B}$ is a $(2,1)$-category, the Duskin nerve $N_*^D(\cat{B})$ is an $\infty$-category.
\end{lemma}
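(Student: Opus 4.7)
The plan is to recognize the Duskin nerve as the homotopy coherent nerve of a locally Kan simplicial category and apply the standard result of Cordier--Porter/Lurie that the homotopy coherent nerve of a locally Kan simplicial category is a quasicategory. By the definition recalled just before the lemma, $N_*^D(\cat B) = N_*^{hc}(\cat B_\Delta)$, where $\cat B_\Delta$ is obtained from $\cat B$ by changing enrichment of the hom-categories along the nerve $N_*\colon \Cat\to \sSet$. Thus it suffices to show that $\cat B_\Delta$ is enriched in Kan complexes whenever $\cat B$ is a $(2,1)$-category.

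The key observation is the classical fact that the nerve $N_*(\cat{D})$ of an ordinary category $\cat{D}$ is a Kan complex if and only if $\cat{D}$ is a groupoid. Since $\cat B$ is a $(2,1)$-category, by hypothesis every $2$-cell in $\cat B$ is invertible, so each hom-category $\cat B(X,Y)$ is a groupoid. Consequently, each mapping simplicial set $\cat B_\Delta(X,Y) = N_*(\cat B(X,Y))$ is a Kan complex, i.e. $\cat B_\Delta$ is locally Kan.

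Having established that $\cat B_\Delta$ is locally Kan, one invokes the standard theorem that the homotopy coherent nerve $N_*^{hc}(\cat C)$ of a simplicially enriched category $\cat C$ is a quasicategory whenever $\cat C$ is locally Kan. Applied to $\cat C = \cat B_\Delta$, this immediately yields that $N_*^D(\cat B) = N_*^{hc}(\cat B_\Delta)$ is an $\infty$-category.

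There is no substantive obstacle: the argument is essentially a two-line reduction to well-known facts, and the main thing to be careful about is the compatibility of the strict $2$-category model of $\cat B$ (as set up in \cref{remark: totally ordered G-sets}) with the change-of-enrichment construction, so that the identification $N_*^D(\cat B) \cong N_*^{hc}(\cat B_\Delta)$ is legitimate; but this is already addressed in the remark preceding the lemma and in \cite[Tag 00KY]{kerodon}. The only remaining bookkeeping would be to give a precise citation for the locally Kan criterion for the homotopy coherent nerve, e.g. to Lurie's \emph{Higher Topos Theory}.
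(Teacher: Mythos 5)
Your proof is correct and follows exactly the same route as the paper's: note that the hom-categories of a $(2,1)$-category are groupoids, so $\cat{B}_\Delta$ is locally Kan, and then invoke the standard fact that the homotopy coherent nerve of a locally Kan simplicial category is an $\infty$-category. The extra remarks about the identification $N_*^D(\cat B)\cong N_*^{hc}(\cat B_\Delta)$ are sensible but already handled by the definition and remark preceding the lemma.
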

\begin{proof}
    Since $\cat{B}$ is a $(2,1)$-category the mapping categories of $\cat{B}$ are groupoids and so the simplicial category $\cat{B}_{\Delta}$ is locally Kan. Thus the homotopy coherent nerve of $\cat{B}_{\Delta}$ is a $\infty$-category.
\end{proof}

To construct spectral Mackey functors, we take the Duskin nerve of the $(2,1)$-category $\mathbb{B}^G_{(2,1)}$, and call the resulting $\infty$-category the \textit{effective Burnside category}.  We note that while this definition is not exactly the one in \cite{barwick_spectral_2017}, it is equivalent (cf. \cite[Definition 2.1]{CMNN}).

\begin{definition}
    The \emph{effective Burnside category} of $G$ is the $\infty$-category $\mathcal{A}^G_{\eff}:=N_*^D(\mathbb{B}^G_{(2,1)})$. 
\end{definition}

Note that the effective Burnside $\infty$-category has categorical products, given by disjoint union of finite $G$-sets. Moreover, the effective Burnside category is a preadditive $\infty$-category, meaning that its homotopy category is preadditive.  To see this, we observe that for any $(2,1)$-category $\cat{B}$, the homotopy category of $N_*^D(\cat{B})$ is equivalent to the homotopy category of $\cat{B}$.  In the case $\cat{B} = \mathbb{B}^G_{(2,1)}$ the resulting homotopy category is the \emph{Lindner category} \cite{Lindner} of finite $G$-sets which is preadditive \cite[Section 3.8]{barwick_spectral_2017}.  

\begin{definition}
    Let $\cat{C}$ be an $\infty$-category with products.  A \emph{Mackey functor in $\cat{C}$} is a product preserving functor $\mathcal{A}^G_{\eff}\to \cat{C}$.  We write $\Mack_G(\cat{C})$ for the $\infty$-category of Mackey functors in $\cat{C}$ and natural transformations between them.  
\end{definition}

When $\cat{C} = \Sp$ is the $\infty$-category of spectra we call a Mackey functor in $\cat{C}$ a \emph{spectral Mackey functor}.  The idea that spectral Mackey functors model genuine $G$-spectra is originally due to Guillou--May, who implement this idea using model categories \cite{GuillouMay}. A proof of the following theorem can be found in \cite{NardinThesis} or in \cite[Appendix A]{CMNN}.

\begin{theorem}
    The homotopy category of $\Mack_G(\Sp)$ is equivalent to the equivariant stable homotopy category.
\end{theorem}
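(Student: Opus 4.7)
The plan is to prove the equivalence by identifying both sides with modules over a common spectral enrichment of the orbit category. The basic strategy is to show that $\Mack_G(\Sp)$ is a stable, presentable $\infty$-category with a set of compact generators whose mapping spectra recover the spectral Burnside category, and then use the Schwede--Shipley theorem to match it with genuine $G$-spectra via the Guillou--May identification.

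First, I would verify that $\Mack_G(\Sp)$ is a stable, presentable $\infty$-category. The key input is the preadditivity of $\mathcal{A}^G_{\eff}$ noted earlier in the excerpt. Combined with a standard result that product-preserving functors from a preadditive $\infty$-category into a stable $\infty$-category form a stable $\infty$-category, this yields stability. Presentability follows since $\mathcal{A}^G_{\eff}$ is essentially small and $\Sp$ is presentable, so $\Mack_G(\Sp)$ is closed in $\Fun(\mathcal{A}^G_{\eff},\Sp)$ under limits and filtered colimits. Second, for each $H\leq G$, let $M_H \in \Mack_G(\Sp)$ be the representable Mackey functor given by $M_H(T) = \Sigma^{\infty}_+ \mathcal{A}^G_{\eff}(G/H, T)$. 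I would show these $M_H$ form a set of compact generators: compactness is inherited from the corepresentability, and a Mackey functor $F$ for which $\mathrm{Map}(M_H,F)\simeq F(G/H)$ vanishes for every $H$ is necessarily zero on every finite $G$-set since every such object is a finite coproduct of orbits.

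Third, I would identify the mapping spectra $\mathrm{Map}_{\Mack_G(\Sp)}(M_H, M_K)$ with the spectra of spans from $G/H$ to $G/K$. This uses the universal property of $\mathcal{A}^G_{\eff}$ as the semiadditive (i.e., product-and-coproduct-preserving) completion of $\Fin^G$, so that values on orbits of $\Sigma^{\infty}_+$ of Hom-sets of the Lindner/Burnside category are computed by classifying spaces of spans. By the Segal--tom Dieck splitting, these match $F(\Sigma^{\infty}_+ G/H, \Sigma^{\infty}_+ G/K)^G$ in the genuine $G$-stable homotopy category. Invoking the $\infty$-categorical Schwede--Shipley theorem, $\Mack_G(\Sp)$ is equivalent to modules over the full subcategory of $\Mack_G(\Sp)$ on the $M_H$, i.e.~over the spectral Burnside category. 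On the other side, the Guillou--May theorem identifies the homotopy category of genuine $G$-spectra with modules over the same spectral Burnside category, so the two homotopy categories are equivalent.

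The main obstacle is the coherent identification in the third step: one must show that the spectral enrichment arising from the Duskin-nerve presentation of $\mathcal{A}^G_{\eff}$ agrees with the stable Burnside category of Guillou--May. This is where the $(2,1)$-categorical structure matters, because the mapping spaces in $\mathcal{A}^G_{\eff}$ are classifying spaces of groupoids of spans, and one must verify that these group-complete to the correct $H$-fixed points of $\Sigma^{\infty}_+ G/K$ in the genuine $G$-stable category. In practice I would punt this coherence problem to the careful treatment in Barwick's original papers \cite{barwick_spectral_2017, barwick_spectral_2020} or, equivalently, to Nardin's comparison in his thesis, both of which establish the universal property of $\mathcal{A}^G_{\eff}$ as the semiadditive envelope of $\Fin^G$ in $\infty$-categories and use it to produce the equivalence of stable presentable $\infty$-categories, whence the claimed equivalence of homotopy categories.
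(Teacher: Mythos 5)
The paper does not actually prove this theorem: it is stated with a citation to \cite{NardinThesis} and \cite[Appendix A]{CMNN}, and your sketch is essentially the argument carried out in those references (stability and presentability of $\Mack_G(\Sp)$ from preadditivity of $\mathcal{A}^G_{\eff}$, compact generation by the corepresented Mackey functors, identification of their mapping spectra with the Guillou--May spectral Burnside category via the Segal--tom Dieck splitting, then Schwede--Shipley). So your route matches the intended proof. One imprecision worth fixing: the formula $M_H(T)=\Sigma^{\infty}_+\mathcal{A}^G_{\eff}(G/H,T)$ does not literally define an object of $\Mack_G(\Sp)$, because $\Sigma^\infty_+$ does not carry products of spaces to products of spectra, whereas a Mackey functor must preserve the products $T\amalg T'$ of $\mathcal{A}^G_{\eff}$. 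The correct generators are obtained by levelwise group completion of the $\mathbb{E}_\infty$-spaces of spans $\mathcal{A}^G_{\eff}(G/H,T)$ (equivalently, they are the left adjoints to evaluation at $G/H$, or the images of orbits under the stabilized Yoneda embedding); your third step already implicitly uses this corrected description when you invoke group completion and the Segal--tom Dieck splitting, so the fix is local. With that adjustment, and granting the coherence comparison you defer to Barwick and Nardin, the outline is the standard proof of the cited result.
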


In particular, constructing a spectral Mackey functor is equivalent to producing (the homotopy type of) a genuine $G$-spectrum.

\subsection{Constructing spectral Mackey functors via squares $K$-theory}\label{sec:constructing SpMacks}

In this brief section, we explain a procedure for producing spectral Mackey functors using squares $K$-theory. The key observation is that every product-preserving $\infty$-functor $F\colon \cat{C}\to\cat{D}$ induces a $\infty$-functor $\Mack_G(\cat{C})\to \Mack_G(\cat{D})$ by post-composition.  

\begin{theorem}\label{thm:KT machine}
    Squares $K$-theory induces an $\infty$-functor $K^{\square}_G\colon \Mack_G(N_*^D(\catsq))\to \Mack_G(\Sp)$.
\end{theorem}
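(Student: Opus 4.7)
The plan is to upgrade $K^{\square}$ to a product-preserving $\infty$-functor $N_*^D(\catsq) \to \Sp$; the theorem then follows formally by post-composition, since the observation preceding the statement turns such a functor into the desired $K^{\square}_G$ between Mackey functor categories. So the real work is to produce this $\infty$-functor and verify product-preservation.

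For the construction, I would proceed in two stages. First, by \cref{proposition: K square is simplicial}, the squares $K$-theory construction gives a simplicially enriched functor $\catsq_\Delta \to \Top_\Delta$. By \cref{rmk:extra conditions}, each $K^{\square}(\cat C)$ is a grouplike $\mathcal{O}$-algebra; every square functor induces an $\mathcal{O}$-algebra map; and by \cref{lemma: K square factors through spectra}, every natural isomorphism of square functors gives a homotopy through $\mathcal{O}$-algebra maps. Together, these say that $K^{\square}$ factors as a simplicially enriched functor into the simplicial category of grouplike $\mathcal{O}$-algebras in $\Top_\Delta$. Second, I would apply the homotopy coherent nerve. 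Since $\catsq$ is a strict $(2,1)$-category, $N_*^D(\catsq)$ agrees with the homotopy coherent nerve of $\catsq_\Delta$, so I obtain an $\infty$-functor from $N_*^D(\catsq)$ to the $\infty$-category of grouplike $\mathbb{E}_\infty$-spaces. By the recognition principle, the latter is equivalent to $\Sp^{cn}$, and composing with the inclusion $\Sp^{cn} \hookrightarrow \Sp$ yields the desired $\infty$-functor.

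To check product-preservation, I would use that $T^\square_\bullet(\cat C \times \cat D) \cong T^\square_\bullet(\cat C) \times T^\square_\bullet(\cat D)$ as simplicial categories, which is immediate from the componentwise construction of products in $\catsq$. Since geometric realization preserves finite products (in compactly generated spaces) and $\Omega_O$ preserves products of grouplike $\mathbb{E}_\infty$-spaces, this yields $K^{\square}(\cat C \times \cat D) \simeq K^{\square}(\cat C) \times K^{\square}(\cat D)$. For the empty product, the squares category consisting only of the distinguished object $O$ is terminal in $\catsq$, and its squares $K$-theory is trivially contractible, hence maps to the zero spectrum. So $K^{\square}\colon N_*^D(\catsq) \to \Sp$ preserves finite products.

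The main obstacle I anticipate is verifying that the simplicial functor genuinely lands in the simplicially enriched subcategory of $\mathcal{O}$-algebras (rather than merely producing algebras on objects and algebra maps on $1$-morphisms), and correctly identifying the homotopy coherent nerve of this enriched subcategory with the standard $\infty$-category of grouplike $\mathbb{E}_\infty$-spaces in $\spaces$. These are essentially bookkeeping issues involving the enrichments and a standard application of the recognition principle, but they need to be set up carefully enough that the resulting $\infty$-functor to $\Sp$ is unambiguous.
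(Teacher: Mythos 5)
Your proposal is correct and follows essentially the same route as the paper: factor $K^{\square}$ through the simplicial category of (bifibrant) grouplike $\mathbb{E}_\infty$-spaces using \cref{proposition: K square is simplicial} and \cref{lemma: K square factors through spectra}, take homotopy coherent nerves, identify the target with $\Sp^{\geq 0}$ via the recognition principle, and check product-preservation using the fact that $K^{\square}$ preserves products of spaces up to homeomorphism. The only difference is that you verify product-preservation at the level of $T^{\square}_\bullet$ and geometric realization where the paper simply checks it on homotopy categories; these amount to the same observation.
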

\begin{proof}
    First, note that the $(2,1)$-category $\catsq$ has products, given by taking product categories, and thus the left hand side is a well defined $\infty$-category.  It suffices to prove the squares $K$-theory induces a product preserving $\infty$-functor $N_*^D(\catsq)\to \Sp$.  
    

Recall from \cref{lemma: K square factors through spectra} that squares $K$-theory determines a functor $K^{\square}\colon \Cat^{\square}_{\Delta}\to \mathrm{Alg}^{\mathrm{grp}}_{\mathbb{E}_{\infty}}(\mathrm{Top})_{\Delta}$, where the target is the simplicial category of grouplike $\mathbb{E}_{\infty}$-spaces.  Let $L^H(\mathrm{Alg}^{\mathrm{grp}}_{\mathbb{E}_{\infty}}(\mathrm{Top})_{\Delta})$ denote the hammock localization (in the sense of \cite{DK2}) of this simplicial category with respect to the weak equivalences of $\mathbb{E}_{\infty}$-spaces. There is then a natural chain of simplicial functors
\[
    \Cat^{\square}_{\Delta}\xrightarrow{K^{\square}}\mathrm{Alg}^{\mathrm{grp}}_{\mathbb{E}_{\infty}}(\mathrm{Top})_{\Delta}\to L^H(\mathrm{Alg}^{\mathrm{grp}}_{\mathbb{E}_{\infty}}(\mathrm{Top})_{\Delta})\to \mathbf{R}L^H(\mathrm{Alg}^{\mathrm{grp}}_{\mathbb{E}_{\infty}}(\mathrm{Top})_{\Delta})
\]
where $\mathbf{R}$ denotes a fibrant replacement in the Bergner model structure. The (underived) homotopy coherent nerve of the target is the Dwyer--Kan localization of the simplicial model category of grouplike, $\mathbb{E}_{\infty}$-algebras at weak equivalences, which is equivalent to the $\infty$-category of connective spectra (\cite[Proposition 4.8]{DK3}, see also \cite{Hinich}).  Thus, applying the homotopy coherent nerve functor to the composite above yields
\[
    N^D_*(\Cat^{\square})\to \Sp^{\geq0},
\]
and postcomposing with the inclusion $\Sp^{\geq0}\hookrightarrow \Sp$ gives the claimed functor. To see that this functor preserves products, it suffices to check on homotopy categories.  At this level, the claim follows because $K^{\square}$, as a functor to spaces, preserves products up to homeomorphism.
\end{proof}

\begin{corollary}\label{corollary: from mackey functor of squares cats to spectral Mackey}
    Any product-preserving pseudofunctor $\cat{M}\colon \mathbb{B}^G_{(2,1)}\to \catsq$ determines a spectral Mackey functor $K^{\square}_G(\cat{M})$ such that
    \[
        K^{\square}_G(\cat{M})^H\simeq K^{\square}(\cat{M}(G/H))
    \]
    for all $H\leq G$. The restriction, transfer, and conjugation morphisms are equivalent to applying $K^{\square}$ to the restriction, transfer, and conjugation morphisms of $\cat{M}$.
    Additionally, if $\eta\colon \cat M\Rightarrow \cat N$ is a pseudonatural transformation of product-preserving pseudofunctors then $K^{\square}(\eta)\colon K^{\square}_G(\cat M)\to K^{\square}_G(\cat N)$ is a map of genuine $G$-spectra.
\end{corollary}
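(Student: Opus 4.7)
The plan is to produce from $\cat{M}$ a Mackey functor in $N_*^D(\catsq)$ and then invoke \cref{thm:KT machine}. The key input is that the Duskin nerve is functorial on (normalized) pseudofunctors and pseudonatural transformations between $(2,1)$-categories; after replacing $\cat{M}$ by a normalized pseudofunctor if necessary, this produces a map of simplicial sets
\[
N_*^D(\cat{M})\colon \mathcal{A}^G_{\eff} = N_*^D(\mathbb{B}^G_{(2,1)}) \longrightarrow N_*^D(\catsq).
\]
I would then verify that this $\infty$-functor preserves products. Since products in the Duskin nerve of a $(2,1)$-category are detected on objects and morphism groupoids by the products of the underlying $(2,1)$-category (disjoint union of finite $G$-sets on the source, product of squares categories on the target), the hypothesis that $\cat{M}$ is a product-preserving pseudofunctor immediately implies that $N_*^D(\cat M)$ is product-preserving. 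Hence $N_*^D(\cat M)$ is a Mackey functor in $N_*^D(\catsq)$, and applying the functor of \cref{thm:KT machine} yields the desired spectral Mackey functor $K^{\square}_G(\cat M)$.

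For the fixed point identification, the standard recipe for extracting geometric fixed point pieces of a spectral Mackey functor is to evaluate at the orbit $G/H$. Tracing through the construction in the proof of \cref{thm:KT machine}, the value of $K^{\square}_G(\cat M)$ at $G/H$ is obtained by first evaluating $\cat M$ at $G/H$ to get an object of $\catsq$, then applying the composite functor $N_*^D(\catsq)\to \Alg_{\mathbb{E}_{\infty}}^{\mathrm{grp}}(\spaces) \simeq \Sp^{\geq 0}\hookrightarrow \Sp$, which on objects is precisely $K^{\square}$. This gives the equivalence $K^{\square}_G(\cat M)^H \simeq K^{\square}(\cat M(G/H))$. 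The restriction, transfer, and conjugation morphisms of the spectral Mackey functor correspond to the images of the standard spans in $\mathbb{B}^G$ (e.g.\ $G/K\leftarrow G/K \to G/H$ for restrictions when $K\leq H$, and reversed for transfers), and by construction these are sent first to the corresponding square functors by $\cat M$ and then to their images under $K^{\square}$.

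For the pseudonatural transformation statement, I would use the standard encoding of a pseudonatural transformation $\eta\colon \cat M\Rightarrow \cat N$ as a pseudofunctor out of a cylinder $(2,1)$-category (or equivalently a normalized pseudofunctor $\mathbb{B}^G_{(2,1)}\to \catsq^{[1]}$ of $(2,1)$-categories, where $[1]$ denotes the interval). Applying the Duskin nerve produces a natural transformation of product-preserving $\infty$-functors $\mathcal{A}^G_{\eff}\to N_*^D(\catsq)$; the functor from \cref{thm:KT machine}, being an $\infty$-functor between $\infty$-categories of Mackey functors, sends this to a morphism in $\Mack_G(\Sp)$, which is precisely a map of genuine $G$-spectra.

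The main obstacle I anticipate is the bookkeeping around the Duskin nerve of pseudofunctors: one needs to check that normalization is not an essential loss of generality (or that a mild variant of the Duskin nerve handles unnormalized pseudofunctors), and one needs to be careful about what ``product-preserving pseudofunctor'' means so that it genuinely descends to a product-preserving $\infty$-functor. Everything else is formal consequence of \cref{thm:KT machine} and the fact that the evaluation-at-$G/H$ functor on spectral Mackey functors computes $H$-fixed points.
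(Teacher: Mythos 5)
Your proposal is correct and follows essentially the same route as the paper: apply the Duskin nerve to $\cat M$ to get a product-preserving $\infty$-functor $\mathcal{A}^G_{\eff}\to N_*^D(\catsq)$, post-compose with the functor of \cref{thm:KT machine}, read off fixed points by evaluating at $G/H$, and handle pseudonatural transformations via the cylinder/simplicial-homotopy encoding (the paper simply cites the fact that the Duskin nerve sends pseudonatural transformations to simplicial homotopies). One small terminological slip: evaluation of a spectral Mackey functor at $G/H$ computes the categorical (genuine) $H$-fixed points, not the \emph{geometric} fixed points, though the identification you carry out is the intended one.
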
\begin{proof}
    Applying the Duskin nerve to the pseudofunctor $\cat{M}$, we obtain an $\infty$-functor
    \[
        \mathcal{A}^G_{\eff} = N_*^D(\mathbb{B}^G_{(2,1)})\xrightarrow{N_*^D(\cat{M})} N_*^D(\catsq)
    \]
    which is product preserving and hence represents and object in $\Mack_G(\catsq)$.  Post-composing with the functor $K^{\square}_G\colon \Mack_G(N_*^D(\catsq))\to \Mack_G(\Sp)$ from \cref{thm:KT machine} gives the claimed spectral Mackey functor $K^{\square}_G(\cat{M})$.  The fact that this sends pseudonatural transformations to natural transformations of $\infty$-functors follows from the fact that the Duskin nerve sends pseudonatural transformations to simplicial homotopies \cite[Proposition 4]{BFB}.
\end{proof}

In summary, we can produce a spectral Mackey functor from any product-preserving pseudofunctor $\cat{M}\colon \mathbb{B}^G_{(2,1)}\to \catsq$ and we can construct morphisms of spectral Mackey functors from the data of pseudonatural transformations. We note that by restricting along the inclusion $\mathbb{B}^G_{(2,1)}\to \mathbb{B}^G$, we can also produce spectral Mackey functors from product-preserving pseudofunctors $\cat{M}\colon \mathbb{B}^G\to \catsq$. In the next section, we use this approach to produce a genuine $G$-spectrum which encodes the data of equivariant $SK$-manifolds.

\section{The genuine $G$-spectrum of equivariant $SK$-manifolds}\label{sec:SKG example}

We give a variation of the category with squares $\Mfldsq$ considered in \cite{SKG}, which will have an equivalent squares $K$-theory.  We note that this construction is very similar to, and in fact inspired by, the construction in \cite[Section 3]{SKG}.  After constructing these categories with squares, we assemble them into a product preserving pseudofunctor $\mathbb{B}^G\to \catsq$ and apply \cref{corollary: from mackey functor of squares cats to spectral Mackey} to produce a $G$-spectrum for equivariant $SK$-manifolds. We follow the conventions of \cite{SKG}, namely that by $G$-manifold we mean an unoriented compact smooth manifold with boundary
and a smooth $G$-action.

\begin{definition}
    For a finite $G$-set $X$, let $\cat{M}_n^G(X)$ denote the category of pairs $(M,f)$, where $M$ is a compact, smooth $G$-manifold of dimension $n$, possibly with boundary, and $f\colon M\to X$ is an equivariant  continuous map.  A morphism $\phi\colon (M,f)\to (M',f')$ in $\cat{M}^G_n(X)$ is a smooth map $\phi\colon M\to M'$ over $X$.
\end{definition}

A smooth equivariant map $\phi\colon M\to M'$ is an \emph{SK-embedding} if it is an equivariant smooth embedding such that every component of $\partial M$ is either mapped surjectively onto a component of $\partial M'$ or is disjoint from $\partial M'$.  We say a morphism $\varphi\in \cat{M}_n^G(X)$ is an \textit{SK-embedding} if the underlying map of $G$-manifolds is an $SK$-embedding. The proof of the following lemma is essentially the same as the proof of \cite[Proposition 4.3]{HMMRS}.

\begin{lemma}
    For any finite $G$-set $X$, the category $\cat{M}^G_n(X)$ is a category with squares where the horizontal and vertical morphisms are equivariant $SK$-embeddings, the distinguished squares are pushout squares, and the distinguished object is the empty manifold with the unique map to $X$.
\end{lemma}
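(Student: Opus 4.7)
The plan is to follow the strategy of \cite[Proposition 4.3]{HMMRS}, but carried out in the equivariant slice category over $X$. The core issue is to verify that the axioms (i)-(v) of \cref{defn:cat w squares} hold, with the most substantive task being the existence and good behavior of pushouts of equivariant $SK$-embeddings in $\cat{M}^G_n(X)$.

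First I would record the formal structure: coproducts in $\cat{M}^G_n(X)$ are given by disjoint union $(M,f)\sqcup(M',f')=(M\sqcup M',f\sqcup f')$, the empty manifold $(\varnothing,!)$ is initial in $\cat{M}^G_n(X)$, and any map out of $\varnothing$ is vacuously an $SK$-embedding, verifying axiom (v). Axiom (iii) is clear once we observe that an equivariant diffeomorphism over $X$ restricts to a bijection on boundary components and hence is an $SK$-embedding. Axiom (iv) then follows formally: a square with vertical (resp. horizontal) isomorphisms is a pushout in the underlying category of $G$-manifolds because isomorphisms are preserved under pushout, and the map to $X$ is determined by the universal property, so the square is distinguished. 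Axiom (i) is a consequence of the fact that in any cocomplete category pushouts commute with coproducts, combined with the observation that the disjoint union of $SK$-embeddings is an $SK$-embedding.

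The main technical step is to establish that equivariant $SK$-embeddings in $\cat{M}^G_n(X)$ admit pushouts that are again objects of $\cat{M}^G_n(X)$, and moreover that the resulting horizontal/vertical legs of the pushout square are themselves $SK$-embeddings. Given a cospan
\[
(C,f_C) \twoheadleftarrow (A,f_A) \rightarrowtail (B,f_B),
\]
I would construct the pushout as a set-theoretic pushout $D=B\sqcup_A C$, equipped with the induced equivariant map $f_D\colon D\to X$ coming from the universal property. To put a smooth structure on $D$, I would use an equivariant tubular neighborhood of $A$ inside $B$ (and inside $C$), whose existence is the standard equivariant tubular neighborhood theorem. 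Since the horizontal map $A\rightarrowtail B$ is an $SK$-embedding, the boundary condition guarantees that these tubular neighborhoods glue compatibly along the boundary, producing a smooth $G$-manifold structure on $D$ with a smooth $G$-action. The same boundary-component analysis used in the proof of \cite[Proposition 4.3]{HMMRS}, applied $G$-equivariantly, shows that the maps $B\to D$ and $C\to D$ are again $SK$-embeddings: each boundary component of $B$ or $C$ is either fully absorbed into the interior of $D$ (if it was glued away along $A$) or maps diffeomorphically onto a boundary component of $D$.

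Once pushouts are established, axiom (ii) follows from the usual pasting lemma for pushouts: horizontal composition of distinguished squares is a pushout because the outer rectangle of two horizontally adjacent pushout squares is again a pushout, and similarly for vertical composition. The remaining check is that the composite horizontal (resp. vertical) map is again an $SK$-embedding, which follows from the fact that composites of equivariant smooth embeddings are equivariant smooth embeddings and the boundary condition is preserved under composition. The main obstacle in the whole proof is really the gluing step above, namely making sure that the equivariant tubular neighborhoods can be chosen so that the resulting $D$ is a genuine smooth $G$-manifold with boundary structure compatible with the $SK$-embedding property; this is where equivariance genuinely adds content beyond the argument in \cite{HMMRS}, and is handled by invoking the equivariant tubular neighborhood theorem and averaging to produce $G$-invariant collars.
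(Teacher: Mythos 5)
Your overall plan (verify axioms (i)--(v) of \cref{defn:cat w squares}, with the distinguished squares being pushout squares of equivariant $SK$-embeddings) is the right one, and your treatment of the individual axioms is essentially the intended argument: the paper gives no proof at all, deferring to \cite[Proposition 4.3]{HMMRS}, and equivariance adds almost nothing beyond noting that $G$-actions and the structure maps to $X$ pass to disjoint unions and restrictions. However, what you single out as the ``main technical step'' is both unnecessary and false as stated. The lemma does not assert that every cospan of $SK$-embeddings admits a pushout in $\cat{M}^G_n(X)$; it only declares that the \emph{distinguished} squares are those commutative squares of $SK$-embeddings that already happen to be pushouts, and nothing in \cref{defn:cat w squares} requires pushouts of arbitrary cospans to exist. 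Indeed they do not exist here: $SK$-embeddings are codimension-zero embeddings, and the pushout of the cospan $[0,1]\twoheadleftarrow [0,1/2]\rightarrowtail [0,1]$ (both legs the standard inclusion, which satisfies the $SK$ boundary condition since $0\mapsto 0$ lands on a boundary component and $1/2$ lands in the interior) is a triod, not a $1$-manifold. No choice of equivariant tubular neighborhoods or invariant collars can repair this; your gluing construction only yields a manifold when the two embeddings meet the boundary compatibly, which is exactly the situation that is singled out by \emph{requiring} the square to be a pushout rather than constructing one.

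Once that step is deleted, what remains is what you already sketch, and it suffices: composites and disjoint unions of $SK$-embeddings are again $SK$-embeddings (for composites one needs the small observation that a codimension-zero embedding carries interior to interior, so a boundary component of $A$ sent into the interior of $B$ stays in the interior of $C$); axioms (i) and (ii) then reduce to the compatibility of coproducts with pushouts and to the pasting lemma for pushout squares, both applied to squares whose vertices are already known to be objects of $\cat{M}^G_n(X)$; axiom (iv) holds because a commutative square with two parallel legs isomorphisms is automatically a pushout; and axioms (iii) and (v) are immediate. I would also flag two minor imprecisions: ``isomorphisms are preserved under pushout'' is the converse of what you need for (iv), and the category of manifolds is not cocomplete, so for (i) you should argue directly that a coproduct of pushout squares is a pushout square whenever all the relevant objects exist, rather than appeal to cocompleteness.
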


For any finite group $G$ with subgroup $H\leq G$, let $\Fin^G$ and $\Fin^H$ denote the categories of finite $G$- and $H$-sets, respectively. It is well known that there is an equivalence of categories between $\Fin^H$ and the slice category $(\Fin^G)_{/(G/H)}$.  The next proposition extends idea to $\cat M^G_n(G/H)$.

\begin{proposition}\label{proposition: equivalence of manifold squares cats}
    Let $H\leq G$ be a subgroup.  There is an equivalence of categories with squares
    \[
        \cat{M}_n^G(G/H)\simeq \Mfld^{H,\partial}_n
    \]
    where the right hand side is the category of squares considered in \cite[Definition 4.5]{SKG}.
\end{proposition}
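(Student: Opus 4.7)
The plan is to exhibit the equivalence as an equivariant version of the standard equivalence $\Fin^H \simeq (\Fin^G)_{/(G/H)}$, namely the one given by induction/restriction. Define a functor $\Phi \colon \mathrm{Mfld}^{H,\partial}_n \to \cat{M}_n^G(G/H)$ by sending an $H$-manifold $M$ to the pair $(G\times_H M, \pi)$, where $\pi\colon G\times_H M\to G/H$ is the projection $[g,m]\mapsto gH$, and sending an equivariant smooth map $\phi \colon M \to M'$ to $\mathrm{id}_G\times_H\phi$. In the other direction, define $\Psi\colon \cat{M}_n^G(G/H)\to \mathrm{Mfld}^{H,\partial}_n$ by sending $(M,f)$ to $f^{-1}(eH)$, which is open and closed in $M$, hence a smooth submanifold, and is preserved by $H$ since $f$ is equivariant and $eH$ is $H$-fixed; a morphism over $G/H$ restricts to an equivariant smooth map between the fibers over $eH$.

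Next, I would verify that $\Phi$ and $\Psi$ form an equivalence of underlying categories. For any $H$-manifold $M$, the natural map $M \to \Psi\Phi(M)=\pi^{-1}(eH)\subset G\times_H M$ sending $m\mapsto [e,m]$ is an $H$-equivariant diffeomorphism. Conversely, for $(M,f)\in \cat{M}_n^G(G/H)$, decomposing $M = \bigsqcup_{gH\in G/H} f^{-1}(gH)$ and using the $G$-action to identify each $f^{-1}(gH)$ with $f^{-1}(eH)$ produces the natural isomorphism $\Phi\Psi(M,f)\cong (M,f)$ in $\cat{M}_n^G(G/H)$.

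The bulk of the remaining work is to check that both functors preserve the category-with-squares structure. For the distinguished object, $\Phi(\varnothing)=G\times_H\varnothing = \varnothing$ (with the unique map to $G/H$) and $\Psi$ on the empty $G$-manifold over $G/H$ gives the empty $H$-manifold. Horizontal and vertical morphisms are preserved in both directions because $\Phi$ and $\Psi$ are defined in terms of constructions that respect smooth equivariant embeddings, and the $SK$-condition on boundary components translates correctly: under $\Phi$, the boundary components of $G\times_H M$ are precisely the $G$-orbits of boundary components of $M$ (viewed inside a fiber), so a component maps surjectively onto a component of the target iff the original does; under $\Psi$, the boundary components of $f^{-1}(eH)$ are exactly those boundary components of the ambient $G$-manifold that lie over $eH$, so the $SK$-condition is inherited by restriction. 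For distinguished squares, both $\Phi$ (as a left adjoint-type construction, being induction) and $\Psi$ (as a restriction to a clopen sub-$H$-manifold) commute with the pushouts along $SK$-embeddings used to define the squares in each category.

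I expect the main obstacle to be the bookkeeping around boundary components and the $SK$-embedding condition, which is the only nontrivial piece of the squares structure: one must argue that the correspondence between boundary components of $M$ and those of $G\times_H M$ (or $f^{-1}(eH)$) is bijective in the right equivariant sense and compatible with the ``surjective onto a component, or disjoint from the boundary'' dichotomy. Once that is in place, preservation of coproducts, the distinguished object, and pushouts is automatic because $G\times_H(-)$ is left adjoint to restriction along the inclusion $H\hookrightarrow G$ and hence commutes with colimits, while $\Psi$ commutes with colimits because taking the preimage of an open subset preserves pushouts of smooth embeddings.
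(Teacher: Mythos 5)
Your proposal is correct and is essentially the paper's own argument: restriction to the fiber over $eH$ in one direction and induction $G\times_H(-)$ (which the paper writes out explicitly via coset representatives) in the other, with the same checks that the composites are naturally isomorphic to the identities and that $SK$-embeddings and pushout squares are preserved componentwise. The only difference is cosmetic (your $\Phi$ and $\Psi$ are the paper's $\Psi$ and $\Phi$).
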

\begin{proof}
    We first define a functor $\Phi\colon \cat{M}_n^G(G/H)\to \Mfld^{H,\partial}_n$. 
    Given a pair $(M,f\colon M\to G/H)\in \cat{M}^G_n(G/H)$, let $\Phi(M) = f^{-1}(eH)$ be the smooth manifold which lives over the unit coset.  Given any $h\in H$ and $x\in \Phi(M)$ we have $f(hx) = hf(x) = h(eH) = eH$ and thus $hx\in \Phi(M)$; in particular the $G$-action on $M$ restricts to an $H$-action on $\Phi(M)$.  On morphisms, $\Phi$ is simply given by restricting morphisms to this preimage. Note that since the restriction of an $SK$-embedding to a component is an $SK$-embedding, the functor $\Phi$ preserves all $SK$-embeddings. With this observation in hand, it is straightforward to see that $\Phi$ is a square functor.
    
    We will show that $\Phi$ is an equivalence by producing an inverse $\Psi\colon \Mfld^{H,\partial}_n\to \cat{M}^G_n(G/H)$, which is just the usual induction of $H$-spaces up to $G$-spaces. Pick coset representatives $g_1,\dots, g_m$ for $G/H$, with $g_1=e$.  For any $g\in G$ and any $1\leq i \leq m$ there exists unique $1\leq j_{g,i}\leq m$ and $h_{g,i}\in H$ such that $gg_i = g_{j_{g,i}}h_{g,i}$.
    Now, for $1\leq i\leq m$ and $M\in \cat M^G_n(G/H)$, let 
    \[
        g_iM = \{(g_i,m)\mid m\in M\}
    \]
    where $g_i$ is acting as a formal label. The set $g_iM$ can be given the structure of a smooth manifold diffeomorphic to $M$. Let 
    \[
        \Psi(M) = \coprod_{i=1}^n g_iM
    \]
    with $G$-action given by
    \[
        g\cdot (g_i,m) = (g_{j_{g,i}},h_{g,i}m)\in g_{j_{g,i}}M.
    \]
    If $\pi\colon \Psi(M)\to G/H$ is the map which sends all of $g_iM$ to $g_iH$ then the pair $(\Psi(M),\pi)$ is a well defined object in $\cat{M}^G_n(G/H)$. It is straightforward to verify $\Psi$ is a square functor.
    
    Now observe that, since $g_1 = e$, we have an $H$-diffeomorphism $g_1M \cong M$ and thus $\Phi\Psi\cong \mathrm{id}$. Next, to see $\Psi\Phi\cong \mathrm{id}$, take an arbitrary pair $(M,f)\in \cat{M}^G_n(G/H)$ and define a map $\Psi\Phi(M,f)\to (M,f)$ by
    \[
        (g_i,m)\mapsto g_im.
    \]
    Locally, this map factors as 
    \[
        \pi^{-1}(g_iH)\cong \Phi(M,f)\cong f^{-1}(eH)\hookrightarrow M\xrightarrow{g_i} M,
    \]
    and we see that its derivative at any point gives an isomorphism of tangent spaces and hence is a local diffeomorphism.  This map is also a bijection and therefore is a diffeomorphism.
\end{proof}

This proposition, together with \cite[Theorem 4.8]{SKG}, yields the following corollary. See \cite[Definition 2.4]{SKG} for a precise definition of $\SK^H_n$. 

\begin{corollary}\label{cor:get SKH}
    For all $H\leq G$, there is an isomorphism of abelian groups
    \[
        K_0^{\square}(\cat{M}^G_n(G/H))\cong \SK^H_n,
    \]
    where the right hand side is the $H$-equivariant $SK$-group of $n$-manifolds with boundary.
\end{corollary}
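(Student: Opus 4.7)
The plan is to simply string together \cref{proposition: equivalence of manifold squares cats} with the corresponding non-equivariant/$H$-equivariant computation of the zeroth squares $K$-group from \cite{SKG}. Concretely, by \cref{proposition: equivalence of manifold squares cats} there is an equivalence of categories with squares $\Phi\colon \cat{M}_n^G(G/H)\simeq \Mfld_n^{H,\partial}$, so it suffices to show $K_0^\square$ is invariant under equivalences of squares categories and then invoke \cite[Theorem 4.8]{SKG}, which identifies $K_0^{\square}(\Mfld_n^{H,\partial})$ with $\SK_n^H$.

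For the first step, I would appeal to the description of $K_0^\square$ recalled in \cref{rmk:extra conditions}: for any cocartesian monoidal category with squares $\cat{C}$,
\[
K_0^\square(\cat{C}) \cong \ZZ[\mathrm{Ob}\,\cat{C}]/\!\sim,
\]
with relations $[O]=0$ and $[A]+[D]=[B]+[C]$ for each distinguished square. Since $\Phi$ and its inverse $\Psi$ are square functors sending the distinguished object to the distinguished object and sending distinguished squares to distinguished squares, they induce well-defined maps on these presentations; the natural isomorphisms $\Phi\Psi\cong \id$ and $\Psi\Phi\cong\id$ constructed in the proof of \cref{proposition: equivalence of manifold squares cats} then show the induced maps on $K_0^\square$ are mutually inverse. (Alternatively, one can simply note that $K_0^\square$ factors through the homotopy category of $K^\square(\cat{C})$ via \cref{lemma: K square factors through spectra}, so any square equivalence induces an isomorphism on $K_0^\square$.)

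Combining these two steps gives the chain of isomorphisms
\[
K_0^\square(\cat{M}_n^G(G/H)) \;\xrightarrow{\Phi_*}\; K_0^\square(\Mfld_n^{H,\partial}) \;\cong\; \SK_n^H.
\]
There is no real obstacle here; the only thing to be mildly careful about is that the $H$-action on $\Phi(M)=f^{-1}(eH)$ used in \cref{proposition: equivalence of manifold squares cats} matches the conventions for $\Mfld_n^{H,\partial}$ used in \cite[Definition 4.5]{SKG}, and that $SK$-embeddings of $G$-manifolds restrict to $SK$-embeddings of $H$-manifolds on $f^{-1}(eH)$. Both were already verified in the proof of \cref{proposition: equivalence of manifold squares cats}, so the corollary follows immediately.
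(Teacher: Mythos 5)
Your proposal matches the paper's argument exactly: the corollary is deduced by combining \cref{proposition: equivalence of manifold squares cats} with \cite[Theorem 4.8]{SKG}, and your extra verification that an equivalence of squares categories induces an isomorphism on $K_0^{\square}$ (via the presentation in \cref{rmk:extra conditions}) is a correct filling-in of a step the paper leaves implicit.
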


In the remainder of this section, we assemble the necessary results to prove the following theorem. 

\begin{theorem}\label{thm:main}
    The assignment $X\mapsto \cat M^G_n(X)$ extends to a product preserving pseudofunctor $\cat M^G_n\colon \mathbb{B}^G\to \catsq$. Therefore there is a spectral Mackey functor $K^{\square}(\cat M^G_n)\in \Mack_G(\Sp)$ with
    \[
        \pi_0^H(K^{\square}(\cat M^G_n))\cong K_0^{\square}(\cat{M}^G_n(G/H))\cong \SK^H_n
    \]
    for all $H\leq G$.
\end{theorem}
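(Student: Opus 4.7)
The plan is to verify the first assertion — that $\cat M^G_n$ extends to a product-preserving pseudofunctor $\mathbb{B}^G\to \catsq$ — since the second assertion then follows immediately by combining \cref{corollary: from mackey functor of squares cats to spectral Mackey} with \cref{cor:get SKH}: the former identifies the $H$-fixed points with $K^{\square}(\cat M^G_n(G/H))$, while the latter identifies $\pi_0$ of this space with $\SK^H_n$.

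On objects the pseudofunctor is already defined. For a span $\omega=[X\xleftarrow{\alpha} A\xrightarrow{\beta} Y]$ of finite $G$-sets, I would define the square functor
\[
\omega_*\colon \cat M^G_n(X)\longrightarrow \cat M^G_n(Y),\qquad (M,f)\longmapsto (M\times_X A,\; \beta\circ \pi_A),
\]
where $M\times_X A$ carries the diagonal $G$-action. Because $X$ is a finite discrete $G$-set, this pullback is canonically the disjoint union $\coprod_{a\in A} f^{-1}(\alpha(a))$, which is evidently a smooth compact $n$-manifold with boundary, and the construction is strictly functorial in morphisms of $\cat M^G_n(X)$. A direct inspection shows that $\omega_*$ preserves the empty manifold, coproducts, equivariant $SK$-embeddings, and pushouts, so it is a square functor. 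A $2$-cell in $\mathbb{B}^G$ — an isomorphism $A\xrightarrow{\cong} A'$ commuting with the two legs — induces a natural isomorphism $\omega_*\Rightarrow \omega'_*$ by functoriality of pullback, and this assignment manifestly respects vertical composition of $2$-cells.

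The pseudofunctoriality data come from the universal property of pullbacks. Given composable spans with middle objects $A$ and $B$, the canonical isomorphism
\[
(M\times_X A)\times_Y B\;\cong\; M\times_X (A\times_Y B)
\]
is natural in $M$ and supplies the compositor, while $M\times_X X\cong M$ gives the unit isomorphism. The associator, unitor, and pentagon/triangle coherence axioms then reduce to the standard coherence of iterated pullbacks in $\Fin^G$, which — with the strict conventions of \cref{remark: totally ordered G-sets} — may be arranged so that only the unit constraints need be tracked up to isomorphism. Product preservation is routine: a morphism $M\to X\sqcup Y$ splits canonically into its restrictions over $X$ and over $Y$, yielding an isomorphism of categories with squares $\cat M^G_n(X\sqcup Y)\cong \cat M^G_n(X)\times \cat M^G_n(Y)$ that intertwines the span action.

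The main obstacle is not any single step but the aggregate bookkeeping needed to promote the evident ``spans act by pullback--pushforward'' pseudofunctor landing in $\Cat$ to one landing in $\catsq$. My strategy is to first construct the pseudofunctor $\mathbb{B}^G\to \Cat$ using the discussion above, where coherence is classical, and then verify separately that every functor, natural isomorphism, and coherence cell in the construction restricts to a square functor, pseudonatural transformation, or modification, respectively. Each such verification is local in $X$ and reduces, via the disjoint-union description of the pullback, to elementary statements about smooth equivariant manifolds — that pushouts along $SK$-embeddings are preserved by pullback along a discrete equivariant map, that coproducts commute with such pullbacks, and so on. Once the pseudofunctor is constructed, \cref{corollary: from mackey functor of squares cats to spectral Mackey} produces the genuine $G$-spectrum $K^{\square}(\cat M^G_n)$ with the asserted fixed points, and \cref{cor:get SKH} completes the identification of $\pi_0^H$ with $\SK^H_n$.
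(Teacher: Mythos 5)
Your proposal is essentially correct and reaches the same functors ($\omega_*=\beta_!\alpha^*$ on a span $[X\xleftarrow{\alpha}A\xrightarrow{\beta}Y]$), but it is organized along a genuinely different route than the paper's proof. You build the span action on $\mathbb{B}^G$ directly, taking the compositors from the pasting lemma for iterated pullbacks and proposing to verify the pseudofunctor coherence axioms by hand. The paper instead never constructs the span action explicitly: it builds only the contravariant pseudofunctor $r\mapsto r^*$ on $(\Fin^G)^{\op}$ (\cref{lemma: restriction preserves SK embeddings}, \cref{lemma: pullback preserves squares}, \cref{lemma: product preserving}), observes that each $r^*$ has a left adjoint $r_!$ which is a square functor, checks that the Beck--Chevalley transformations associated to pullback squares of finite $G$-sets are invertible (\cref{proposition: Beck condition}), and then invokes the universal property of the span bicategory \cite[Theorem 2.8(3)]{DPP} to obtain the extension $\widehat{\cat M}{}^G_n\colon\mathbb{B}^G\to\catsq$ essentially uniquely (\cref{theorem: there is a pseudofunctor}). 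The trade-off is exactly at the step you flag as ``aggregate bookkeeping'': the associativity and unit coherence for a pseudofunctor out of a bicategory whose own composition is only defined via chosen pullbacks is the genuinely delicate part of the direct approach, and it is precisely what the appeal to \cite{DPP} discharges once the Beck--Chevalley condition is verified. Your route is more self-contained but leaves that coherence check as an assertion; if you carry it out (or cite the standard coherence result for span bicategories), the argument is complete, and the deduction of the second claim from \cref{corollary: from mackey functor of squares cats to spectral Mackey} and \cref{cor:get SKH} matches the paper exactly.
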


The second claim follows from \cref{thm:KT machine} and \cref{cor:get SKH}, and so the the work of this section is proving the first claim. As a consequence of this theorem, we obtain a positive answer to the first part of the conjecture posed in the introduction of \cite{SKG}.

\begin{corollary}
    The connected components $\pi_0 K^{\square}(\cat M^G_n)$ recover the $\SK^G$-Mackey functor from \cite{SKG}.
\end{corollary}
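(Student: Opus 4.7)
The plan is to verify that the Mackey functor $\pi_0 K^{\square}(\cat M^G_n)$ agrees with the $\SK^G$-Mackey functor of \cite{SKG}. The underlying abelian groups are already identified as $\SK^H_n$ by \cref{cor:get SKH} via the equivalence $\Phi$ of \cref{proposition: equivalence of manifold squares cats}, so the real content is matching the restriction, transfer, and conjugation maps.

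First I would use \cref{corollary: from mackey functor of squares cats to spectral Mackey} to express the structure maps of $\pi_0 K^{\square}(\cat M^G_n)$ as $K_0^{\square}$ applied to the value of the pseudofunctor $\cat M^G_n$ on spans in $\mathbb{B}^G$. From the construction of this pseudofunctor, a span $[X\xleftarrow{p} A \xrightarrow{q} Y]$ is sent to the composite square functor $q_!\circ p^*$, where $p^*$ is the pullback along $p$ and $q_!$ is postcomposition with $q$. Specializing to the spans that produce the canonical restriction, transfer, and conjugation operations between the orbits $G/K$ and $G/H$, and transporting along $\Phi$, I would then identify the resulting square functors with their expected classical descriptions: restriction along $K\leq H$ corresponds to restriction of the group action on $\Phi(M) = f^{-1}(eH)$, the transfer along $K\leq H$ corresponds to induction $H\times_K(-)$ (which matches exactly the formula appearing in the definition of $\Psi$ in the proof of \cref{proposition: equivalence of manifold squares cats}), and the isomorphism given by $g\in G$ corresponds to the usual conjugation action on manifolds.

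With these identifications in hand, applying $K_0^{\square}$ yields structure maps on $\SK^H_n$ which are the usual equivariant restriction, induction, and conjugation of $SK$-classes, namely those used in \cite{SKG} to define the $\SK^G$-Mackey functor. I expect the main obstacle to be the transfer case, where one must verify that the induction square functor arising from $q_!\circ p^*$ matches the one in \cite{SKG} at the level of underlying $G$-manifolds, not merely after passing to $SK$-equivalence classes. This verification is essentially bookkeeping with coset representatives, guided by the explicit formula for $\Psi$ and the strict pullback choices in $\mathbb{B}^G$ of \cref{remark: totally ordered G-sets}; once it is settled, the isomorphism of Mackey functors follows immediately.
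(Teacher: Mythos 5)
Your proposal is correct and follows essentially the same route as the paper: identify the groups via \cref{cor:get SKH} and then match the restriction, transfer, and conjugation maps coming from the spans with those of \cite{SKG}. The paper's own proof is just a one-line appeal to \cref{thm:main,cor:get SKH} together with the observation that the restriction and transfer functors of $\cat M^G_n$ are by construction identical to those in \cite[Section 3]{SKG}, so the explicit bookkeeping you outline (transporting along $\Phi$ and checking $q_!\circ p^*$ on the canonical spans) is the content the authors leave implicit.
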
\begin{proof}
    This follows from \cref{thm:main,cor:get SKH} and comparing the construction of $\cat M^G_n$ to the one in \cite[Section 3]{SKG}. In particular, the definition of the restriction and transfer functors are identical.
\end{proof}

We now construct the pseudofunctor $\cat M_n^G$. Given any map of finite $G$-sets $r\colon X\to Y$, there is an induced functor
\[
    r^*\colon \cat{M}^G_n(Y)\to \cat{M}^G_n(X)
\]
given by pullback. Explicitly, given a pair $(M,f\colon M\to X)$, we have a pullback square
\[
    \begin{tikzcd}
    P \ar[d,"r_f"'] \ar[r,"f_r"] & X \ar[d,"r"]\\
    M \ar[r,"f"] & Y
    \end{tikzcd}
\]
and we define $r^*(M,f) = (P,f_r)$.  We note that while the class of smooth $n$-manifolds with boundary is not, in general, closed under the operation of pullback, there is no issue here because we are pulling back along a map of finite $G$-sets. The functor $r^*$ is determined on morphisms by the universal property of pullbacks. We now show that $r^*$ preserves $SK$-embeddings.

\begin{lemma}\label{lemma: restriction preserves SK embeddings}
    Let $\varphi\colon (M,f)\to (M',f')$ be an $SK$-embedding in $\cat{M}^G_n(Y)$ and $r\colon X\to Y$ an equivariant map of finite $G$-sets. Then $r^*(\varphi)\colon (P,f_r)\to (P',f'_r)$ is an $SK$-embedding in $\cat{M}^G_n(X)$.
\end{lemma}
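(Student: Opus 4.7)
The plan is to exploit the fact that $X$ and $Y$ are finite (hence discrete) $G$-sets, which forces the pullback $P$ to split concretely as a disjoint union of preimages of $f$, indexed by $X$. With that explicit description in hand, verifying each part of the $SK$-embedding condition reduces to the corresponding condition for $\varphi$ summand-by-summand.

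First I would spell out the pullback explicitly. Since $Y$ is a finite discrete $G$-set, the preimages $M_y := f^{-1}(y)$ for $y\in Y$ are clopen in $M$ (each a union of connected components), and analogously for $M'$. The pullback along $r$ is then
\[
P = \coprod_{x\in X} f^{-1}(r(x)), \qquad P' = \coprod_{x\in X} (f')^{-1}(r(x)),
\]
with $f_r$ collapsing the $x$-summand to $x$, $r_f$ the disjoint union of inclusions into $M$, and $r^*(\varphi)$ given on the $x$-summand simply by $\varphi|_{f^{-1}(r(x))}\colon f^{-1}(r(x)) \to (f')^{-1}(r(x))$ (this lands in the correct summand of $P'$ because $f'\circ\varphi = f$).

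Next I would verify the basic properties. Equivariance of $r^*(\varphi)$ follows because the $G$-action on $P$ permutes the summands in the same way as on $X$, and on each summand acts by the restriction of the $G$-action on $M$; similarly on $P'$. Smoothness and the embedding property are local, and on each summand $r^*(\varphi)$ is literally a restriction of $\varphi$ to a clopen subset, so these properties are inherited.

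The only nontrivial step is the boundary condition, and this is where I expect the main bookkeeping to occur. The components of $\partial P$ are indexed by pairs $(x,D)$ where $x\in X$ and $D$ is a component of $\partial M$ lying in $f^{-1}(r(x))$; the analogous statement holds for $\partial P'$. Given such a pair $(x,D)$, the hypothesis on $\varphi$ gives two cases. If $\varphi(D)$ is disjoint from $\partial M'$, then $r^*(\varphi)$ sends the component $(x,D)$ of $\partial P$ into the interior of the $x$-summand of $P'$, hence disjoint from $\partial P'$. If instead $\varphi(D)$ is a component $D'$ of $\partial M'$ onto which $D$ surjects, then because $f'\circ\varphi=f$ and $f(D)=r(x)$, we have $D'\subseteq (f')^{-1}(r(x))$, so $(x,D')$ is a component of $\partial P'$, and $r^*(\varphi)$ sends $(x,D)$ surjectively onto it. In either case the $SK$-embedding condition is satisfied, completing the proof.
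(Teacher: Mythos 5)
Your proof is correct and follows essentially the same route as the paper: both identify the pullback along a map of finite discrete $G$-sets as a disjoint union of clopen preimages $f^{-1}(r(x))$ indexed by $X$, with $r^*(\varphi)$ acting summand-by-summand as a restriction of $\varphi$. The only difference is presentational --- the paper factors $r$ into a surjection followed by an injection and cites closure of $SK$-embeddings under restriction and disjoint union, whereas you verify the boundary condition directly on components of $\partial P$; both are fine.
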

\begin{proof}
    Since being an $SK$-embedding is a condition on the underlying non-equivariant map, it suffices to prove the claim for $G=e$. Factoring $r\colon X\to Y$ as $X\twoheadrightarrow \im(r)\hookrightarrow Y$, we see that it suffices to prove the claim in the special case that $r$ is (i) injective and (ii) surjective.

    Let $M_y = f^{-1}(y)$ and $M_y' = (f')^{-1}(y)$, for $y\in Y$. (i) If $r\colon X\to Y$ is an injection, then 
    \[
        P = r^*(M) = \coprod_{x\in X} M_{r(x)} \subset M
    \]
    and $f_r$ is the restriction of $f$ to these components.  The pullback $P' = r^*(M')$ has a similar description, and the induced map $r^*(\phi)\colon P\to P'$ is the restriction of $\varphi$ to $P$.  Since the restriction of an $SK$-embedding is an $SK$-embedding, we see $r^*$ preserves $SK$-embeddings when $r$ is injective.

    (ii) When $r$ is surjective, we have 
    \[
        P \cong \coprod_{y\in Y} \coprod_{x\in r^{-1}(y)} M_{x},\quad \mathrm{and}\quad P' \cong \coprod_{y\in Y} \coprod_{x\in r^{-1}(y)} M_{x}'
    \]
    and with this identification the map $r^*(\phi)$ is given by
    \[
        \coprod_{y\in Y}\coprod_{x\in r^{-1}(y)} \varphi \colon \coprod_{y\in Y}\coprod_{x\in r^{-1}(y)} M_{x}\to \coprod_{y\in Y}\coprod_{x\in r^{-1}(y)} M_{x}'
    \]
    which makes sense because the requirement that $f'\circ\varphi =f$ implies that $\varphi(M_x)\subset M'_x$.  Since the disjoint union of $SK$-embeddings is an $SK$-embedding, we are done.
\end{proof}

We claim that $r^*$ also preserves squares. We omit the proof as it is essentially the same as the previous one. The crucial claim is that pullbacks along maps of finite $G$-sets preserve pushout squares, which follows from the observation that $r^*$ is both a left and a right adjoint.

\begin{lemma}\label{lemma: pullback preserves squares}
    If 
    \[
        \begin{tikzcd}
            M \ar[r] \ar[d] & N \ar[d]\\
            P \ar[r] & Q \ar[phantom,from =1-1,to = 2-2,"\square"]
        \end{tikzcd}
    \]
    is a distinguished square (a pushout) in $\cat{M}_n^G(Y)$ and $r\colon X\to Y$ is a map of finite $G$-sets, then 
    \[
        \begin{tikzcd}
            r^*M \ar[r] \ar[d] & r^*N \ar[d]\\
            r^*P \ar[r] & r^*Q \ar[phantom,from =1-1,to = 2-2,"\square"]
        \end{tikzcd}
    \]
    is a distinguished square in $\cat{M}^G_n(X)$.
\end{lemma}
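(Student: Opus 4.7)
The plan is to mirror the proof of \cref{lemma: restriction preserves SK embeddings} by factoring $r\colon X\to Y$ as $X\twoheadrightarrow \im(r)\hookrightarrow Y$ and handling the injective and surjective cases separately. By the previous lemma, each of the four morphisms in the pulled-back square is already an $SK$-embedding, so the only task is to verify that the pulled-back square is a pushout in $\cat{M}^G_n(X)$.

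When $r$ is injective, $r^*M$ identifies with the union of connected components of $M$ lying over $r(X)\subseteq Y$, and similarly for the three other corners. Pushouts of $G$-manifolds along $SK$-embeddings are formed component-wise over $Y$, so restricting to the subcollection of components indexed by $r(X)$ manifestly preserves the pushout structure.

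When $r$ is surjective, the formula from the proof of the previous lemma presents $r^*M$ as the disjoint union $\coprod_{y\in Y}\coprod_{x\in r^{-1}(y)} M_y$, i.e., each fiber $M_y$ gets duplicated $|r^{-1}(y)|$ times. Since pushouts in $\cat{M}^G_n(Y)$ are computed fiberwise over the discrete base $Y$ (so that $(N\cup_M P)_y \cong N_y \cup_{M_y} P_y$) and since disjoint union commutes with pushouts, the pulled-back square is again a pushout.

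A more conceptual framing, which is what the hint in the lemma statement refers to, is that for a map $r$ between discrete spaces the pullback functor on $G$-equivariant topological spaces over $Y$ has both a left adjoint (post-composition with $r$) and a right adjoint (dependent product along the étale map $r$), and therefore preserves both limits and colimits. The main subtlety I expect is checking that pushouts of smooth manifolds along $SK$-embeddings are detected in this ambient topological category; this is what makes the abstract adjointness argument legitimate and it reduces to the component-wise/fiberwise descriptions above.
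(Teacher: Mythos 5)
Your proposal is correct and follows exactly the route the paper indicates: the paper omits the proof, saying it is ``essentially the same'' as that of \cref{lemma: restriction preserves SK embeddings} and that the crucial point is that $r^*$ preserves pushouts because it is both a left and a right adjoint — which is precisely your injective/surjective fiberwise analysis together with your concluding conceptual framing. Your added remark that one must check pushouts along $SK$-embeddings are detected fiberwise over the discrete base is a reasonable and correct way to make the omitted argument precise.
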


We now describe the transfers. For any map of finite $G$-sets $r\colon X\to Y$, the map $r^*\colon \cat{M}^G_n(Y)\to \cat{M}_n^G(X)$ has a left adjoint $r_!\colon \cat{M}^G_n(X)\to \cat{M}^G_n(Y)$ which, on objects, sends a pair $(M,f\colon M\to X)$ to $(M,r\circ f)$.  On morphisms, this functor sends $\varphi\colon (M,f)\to (M',f')$ to $\varphi\colon (M,r\circ f)\to (M',r\circ f')$ and this evidently preserves $SK$-embeddings.  Since it is a left adjoint, $r_!$ preserves pushouts and hence these left adjoints are square functors.

\begin{remark}
In what follows, it will be helpful to have an explicit presentation of the unit and counit for the adjunction $r_!\dashv r^*$ associated to a map of finite $G$-sets $r\colon X\to Y$.  To that end, fix pairs $(M,f\colon M\to X)\in \cat{M}_n^G(X)$ and $(N,h\colon N\to Y)\in \cat{M}_n^G(Y)$.  
To describe the counit, observe that $r^*(N,h) =(P_N,h_r\colon P_N\to X)$ is defined by the pullback
\[
    \begin{tikzcd}
        P_N \ar[r,"h_r"] \ar[d,"r_h"] & X\ar[d,"r"]\\
        N\ar[r,"h"] & Y
    \end{tikzcd}
\]
and the counit is simply the map $r_h\colon (P_N,r\circ h_r)\to (N,h)$.  
For the unit, consider the diagram
\[\begin{tikzcd}
	M \\
	& {P_M} && X \\
	& M && Y
	\arrow["\eta", dashed, from=1-1, to=2-2]
	\arrow["f", curve={height=-12pt}, from=1-1, to=2-4]
	\arrow["{=}"', curve={height=12pt}, from=1-1, to=3-2]
	\arrow["{(r\circ f)_r}"', from=2-2, to=2-4]
	\arrow[from=2-2, to=3-2]
	\arrow["\lrcorner"{anchor=center, pos=0.125}, draw=none, from=2-2, to=3-4]
	\arrow["r", from=2-4, to=3-4]
	\arrow["{r\circ f}"', from=3-2, to=3-4]
\end{tikzcd}\]
where the pullback square defines $r^*r_!(M,f) = (P_M,(r\circ f))$.  The map $\eta$ is the unit.
\end{remark}

The next proposition is the key technical point in the proof of \cref{thm:main}.

\begin{proposition}\label{proposition: Beck condition}
    For any pullback square
    \[
        \begin{tikzcd}
            A \ar[r,"f"] \ar[d,swap,"g"] & B \ar[d,"h"]\\
            C\ar[r, swap, "k"] & D
        \end{tikzcd}
    \]
    in finite $G$-sets the square
    \[
        \begin{tikzcd}
            \cat{M}^G_n(A) \ar[d,"g_!"] & \cat{M}^G_n(B) \ar[l,"f^*"']  \ar[d,"h_!"]\\
            \cat{M}^G_n(C) & \cat{M}^G_n(D) \ar[l,"k^*"] \ar[from = 1-1,to = 2-2,Rightarrow,shorten = 2mm,"\beta"]
        \end{tikzcd}
    \]
    commutes up to the natural Beck--Chevalley transformation $\beta$ given by
    \[
        g_!f^* \xrightarrow{g_!f^* \cdot \eta_h} g_!f^*\circ(h^*h_!)  = (g_!g^*)\circ k^*h_! \xrightarrow{\epsilon_g \cdot k^*h_!} k^*h_!.
    \]
    Moreover the Beck--Chevalley transformation is an natural isomorphism.
\end{proposition}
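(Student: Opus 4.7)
The plan is to verify the Beck--Chevalley condition pointwise by unpacking both sides of $\beta$ on an arbitrary object and showing they agree via a canonical isomorphism of iterated pullbacks. Fix $(N, h_N\colon N\to B)\in \cat M^G_n(B)$. Using the explicit formulas for $r^*$ and $r_!$, the left composite gives $g_!f^*(N,h_N) = (N\times_B A,\, g\circ \pi_A)$, where $N\times_B A$ is the pullback along $f$; the right composite gives $k^*h_!(N,h_N) = (N\times_D C,\, \pi_C)$, the pullback along $k$. Because the original diagram is a pullback of finite $G$-sets, the canonical map $A \to B\times_D C$ is an isomorphism, which induces a canonical $G$-equivariant diffeomorphism
\[
N\times_B A \;\cong\; N\times_B (B\times_D C) \;\cong\; N\times_D C
\]
of smooth $G$-manifolds over $C$. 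This is the candidate natural isomorphism, and it is precisely the pullback property of the square of $G$-sets that makes this identification available.

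Next, I would verify that $\beta$ itself agrees with this canonical isomorphism. Using the explicit descriptions of unit and counit from the remark preceding the proposition, the component of $g_!f^*\cdot \eta_h$ at $(N,h_N)$ is induced by the unit map $N\to N\times_D B$ given by $(\id_N, h_N)$; after applying $f^*$ and invoking the equality $f^*h^* = g^*k^*$ (which holds because $hf = kg$), this becomes a map $N\times_B A\to (N\times_D C)\times_C A$. The component of $\epsilon_g\cdot k^*h_!$ is then the projection $(N\times_D C)\times_C A\to N\times_D C$. Chasing through the composite, one obtains the assignment $(n,a)\mapsto (n, g(a))$, which is exactly the canonical isomorphism described above.

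The main obstacle is the bookkeeping in this last identification: one must carefully track how the unit and counit, each defined by universal properties of pullbacks, interact with the base-change identification $f^*h^* = g^*k^*$. Once this is done, naturality of $\beta$ in $(N,h_N)$ is automatic from naturality of $\eta$ and $\epsilon$, and functoriality in morphisms of $\cat M^G_n(B)$ requires no extra argument because every ingredient is defined by a universal property of pullbacks. Since each component of $\beta$ is a diffeomorphism, $\beta$ is therefore a natural isomorphism.
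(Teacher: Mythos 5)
Your proof is correct and follows essentially the same route as the paper's: a pointwise check that unpacks the unit and counit explicitly, identifies $N\times_B A$ with $N\times_D C$ using the pullback property of the square of $G$-sets (the paper invokes the pasting lemma, you invoke $A\cong B\times_D C$, which is the same fact), and chases the composite to the formula $(n,a)\mapsto (n,g(a))$. No substantive differences.
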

\begin{remark}
    The Beck--Chevalley natural transformation $\beta$ in the second square always exists, so the content of the proposition is that this natural transformation is a natural isomorphism.
\end{remark}
\begin{proof}
    We will show that the component of this natural transformation $\beta$ at an arbitrary pair $(M,\alpha)\in \cat{M}_n^G(B)$ is an isomorphism.  Since a morphism in $\cat{M}_n^G(C)$ is an isomorphism if and only if the underlying map in the category of smooth manifolds is an isomorphism, we will simply check this map is a diffeomorphism of smooth manifolds.  

    On underlying manifolds, we have 
    \begin{itemize}
        \item $g_!f^*(M) = M\times_B A$,
        \item $g_!f^*h^*h_!(M) = (M\times_D B)\times_B A$,
        \item $g_!g^*k^*h_!(M) = (M\times_D C)\times_C A$,
        \item $k^*h_!(M) = M\times_D C$
    \end{itemize}
    where, when relevant, we consider $M$ as a manifold over $D$ via the composite $M\xrightarrow{\alpha} B\xrightarrow{h} D$. 
    
    Let $(m,a)\in M\times_B A$ so that $\alpha(m) = f(a)$. Note that since the square $ABCD$ is a pullback, the pasting lemma for pullbacks implies that $M\times_B A\cong M\times_D C$.  This diffeomorphism is given by $(m,a)\mapsto (m,g(a))$. Unpacking the unit and counit of the adjunctions as described above, we can compute the Beck--Chevalley map on the pair $(m,a)$ by the sequence
    \[
        (m,a)\mapsto ((m,\alpha(m)),a) = ((m,g(a)),a)\mapsto (m,g(a))
    \]
    which we have just observed is an diffeomorphism.
\end{proof}

Given a composable pair of equivariant functions
\[
    X\xrightarrow{r} Y \xrightarrow{s} Z,
\]
it is not true that $r^*\circ s^* = (s\circ r)^*$ with strict equality, and hence $X\mapsto \cat{M}_n^G(X)$ is not a functor $(\Fin^G)^{\op}\to \catsq$.  However, there are canonical isomorphisms $r^*\circ s^* \cong (s\circ r)^*$ and these are sufficiently compatible to see that $X\mapsto \cat{M}_n^G(X)$ is a pseudofunctor $\cat{M}^G_n\colon (\Fin^G)^{\op}\to \catsq.$

\begin{lemma}\label{lemma: product preserving}
    The assignment $X\mapsto \mathcal{M}_n^G(X)$ extends to a pseudofunctor $(\Fin^G)^{\op}\to \catsq$ which sends disjoint unions of finite $G$-sets to products of categories.
\end{lemma}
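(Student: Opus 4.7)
The plan is to verify the two assertions separately. First, the pseudofunctoriality of $X\mapsto \cat{M}_n^G(X)$, and second, that the resulting pseudofunctor sends disjoint unions to products.

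For the pseudofunctor structure, the crucial observation is that pullback along maps of finite $G$-sets is the standard example of a pseudofunctor, and our construction is essentially pullback in a slice-like setting. Given composable maps $X\xrightarrow{r}Y\xrightarrow{s}Z$ of finite $G$-sets and a pair $(N,h\colon N\to Z)\in \cat{M}_n^G(Z)$, the universal property of pullbacks yields a canonical diffeomorphism $r^*s^*(N,h)\cong (sr)^*(N,h)$ over $X$, which is manifestly natural in $(N,h)$. I would check that these isomorphisms are square functor isomorphisms, which follows immediately since they are natural isomorphisms of categories and \cref{lemma: K square factors through spectra}-style observations show that all relevant structure is preserved. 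The pentagon and unit coherence conditions reduce to the standard pasting lemma for pullbacks and the fact that the identity map gives a strict unit. To complete this step, I would also verify that each $r^*$ is a square functor, which is essentially already done: \cref{lemma: restriction preserves SK embeddings} shows $r^*$ preserves $SK$-embeddings, \cref{lemma: pullback preserves squares} shows it preserves distinguished squares, and the remaining conditions (preservation of the empty manifold and of coproducts) follow because pullback along a map of finite $G$-sets is both a left and right adjoint.

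For the disjoint union claim, suppose $X = X_1\sqcup X_2$ in $\Fin^G$. Any pair $(M,f\colon M\to X_1\sqcup X_2)\in \cat{M}_n^G(X_1\sqcup X_2)$ decomposes canonically as $M = f^{-1}(X_1)\sqcup f^{-1}(X_2)$, with $f$ restricting to equivariant maps $f_i\colon f^{-1}(X_i)\to X_i$. This assignment is inverse to the functor $\cat{M}_n^G(X_1)\times\cat{M}_n^G(X_2)\to \cat{M}_n^G(X_1\sqcup X_2)$ given by taking disjoint unions. I would then check that this equivalence respects the squares structure: a morphism in the product is an $SK$-embedding (resp.\ a distinguished square) if and only if each component is, and the empty object corresponds to the pair of empty objects. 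These verifications are straightforward from the definition of $SK$-embedding and the fact that pushouts in $\cat{M}^G_n(X)$ are computed componentwise over $X$.

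The main obstacle, which is mild, is verifying the coherence of the pseudofunctor, specifically that the associators $r^*s^*\cong (sr)^*$ satisfy the pentagon identity and unit axioms. However, this is a formal consequence of the universal property of pullbacks and is essentially the content of the fact that a category with pullbacks gives rise to a pseudofunctor from its arrow category; adapted to the present setting with the chosen smooth structures on pullbacks over finite $G$-sets, no genuine complications arise because the pullbacks in question are just taking preimages of cosets. In particular, the non-strictness observed just before the lemma is precisely captured by this pseudofunctorial data, completing the proof.
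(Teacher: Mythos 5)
Your proof is correct and follows essentially the same route as the paper: the product-preservation argument via $M\mapsto (f^{-1}(X_1),f^{-1}(X_2))$ is exactly the one given there, and your treatment of the pseudofunctor coherence via the pasting lemma for pullbacks is the standard argument the paper leaves implicit in the discussion preceding the lemma. One small quibble: the appeal to \cref{lemma: K square factors through spectra} is misplaced (that lemma concerns $K^{\square}$ of natural isomorphisms, not the squares structure); the coherence isomorphisms are $2$-cells in $\catsq$ simply because the $2$-cells of $\catsq$ are by definition natural isomorphisms of square functors.
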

\begin{proof}
    All we need to check is that this psuedofunctor preserves products.  The categorical product in $(\Fin^{G})^{\op}$ is the coproduct in $\Fin^G$, hence it is disjoint union.  If $X = X_1\amalg X_2$ there is an isomorphism 
    \[
        \cat{M}_n^G(X)\cong \cat{M}_n^G(X_1)\times \cat{M}_n^G(X_2)
    \]
    which sends a pair $(M,f\colon M\to X)$ to the tuple $((f^{-1}(X_1),f),(f^{-1}(X_2),f)$.
\end{proof}

Let $\iota\colon (\Fin^G)^{\op}\to \mathbb{B}^G$. denote the embedding $\iota(r\colon X\to Y) = [Y \xleftarrow{r} X\xrightarrow{=} X]$.

\begin{theorem}\label{theorem: there is a pseudofunctor}
    There exists an essentially unique product preserving pseudofunctor
    \[
        \widehat{\cat{M}}_n^G\colon \mathbb{B}^G\to \catsq
    \]
    with the property that $\iota^*(\widehat{\cat{M}}_n^G) = \cat{M}^G_n$. 
\end{theorem}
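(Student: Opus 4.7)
The plan is to build $\widehat{\cat M}_n^G$ explicitly by combining the restriction pseudofunctor $\cat M_n^G$ of \cref{lemma: product preserving} with the left adjoint transfer functors $r_!$ introduced before \cref{proposition: Beck condition}, glued together via the Beck--Chevalley isomorphisms. On objects, set $\widehat{\cat M}_n^G(X) := \cat M_n^G(X)$. On a $1$-cell $\omega = [X \xleftarrow{r} A \xrightarrow{f} Y]$ in $\mathbb{B}^G$, declare
\[
\widehat{\cat M}_n^G(\omega) := f_! \circ r^* \colon \cat M_n^G(X) \to \cat M_n^G(Y),
\]
which is a square functor as a composite of square functors. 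On a $2$-cell represented by a map $g\colon A\to A'$ between apices satisfying $r = r'g$ and $f = f'g$, assign the natural transformation
\[
f_! r^* \cong f'_! g_! g^* (r')^* \xrightarrow{f'_! \epsilon_g (r')^*} f'_! (r')^*,
\]
where $\epsilon_g$ is the counit of $g_!\dashv g^*$ and the leading isomorphism uses the pseudofunctoriality constraints for $(-)^*$ and $(-)_!$.

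The main technical content lies in the compositor. For composable spans $\omega_1 = [X \xleftarrow{r} A \xrightarrow{f} Y]$ and $\omega_2 = [Y \xleftarrow{s} B \xrightarrow{q} Z]$, form the pullback of $f$ and $s$ with apex $A\times_Y B$ and projections $k\colon A\times_Y B\to A$, $h\colon A\times_Y B\to B$, so that $\omega_2\circ\omega_1 = [X \xleftarrow{rk} A\times_Y B \xrightarrow{qh} Z]$. Then
\[
\widehat{\cat M}_n^G(\omega_2)\circ\widehat{\cat M}_n^G(\omega_1) = q_! s^* f_! r^* \xrightarrow{\cong} q_! h_! k^* r^* \xrightarrow{\cong} (qh)_! (rk)^* = \widehat{\cat M}_n^G(\omega_2\circ\omega_1),
\]
where the first isomorphism is (the inverse of) the Beck--Chevalley isomorphism $s^* f_! \cong h_! k^*$ of \cref{proposition: Beck condition}, and the second uses the pseudofunctoriality constraints already present for $(-)_!$ and $(-)^*$.

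Verifying the associativity and unit coherence axioms for this compositor is then formal: they follow from the naturality and compatibility of mates under horizontal and vertical pasting of adjunctions, combined with the pseudofunctor axioms for $\cat M_n^G$ and for its covariant transfer counterpart. The cleanest way to package this is to invoke the universal property of the span bicategory: for a category $\mathcal C$ with pullbacks and a target bicategory $\cat K$, a pseudofunctor $\mathcal C^{\op}\to\cat K$ whose components all admit left adjoints satisfying the Beck--Chevalley condition extends essentially uniquely to a pseudofunctor $\Span(\mathcal C)\to \cat K$, a principle used widely in the construction of equivariant $K$-theory spectra (see e.g.\ \cite{barwick_spectral_2017,BohmannOsorno1}). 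Applied with $\mathcal C = \Fin^G$ and $\cat K = \catsq$, the hypotheses are precisely \cref{lemma: product preserving,proposition: Beck condition}, together with \cref{lemma: restriction preserves SK embeddings,lemma: pullback preserves squares} and the observation that each $r_!$ is a square functor.

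Product preservation of $\widehat{\cat M}_n^G$ then follows from product preservation of $\cat M_n^G$ (\cref{lemma: product preserving}), since disjoint union of $G$-sets is simultaneously the product and the coproduct in $\mathbb{B}^G$. Essential uniqueness is likewise immediate: the condition $\iota^*(\widehat{\cat M}_n^G) = \cat M_n^G$ pins down the pseudofunctor on restriction-type spans; any transfer-type span $[A \xleftarrow{=} A \xrightarrow{f} Y]$ must be sent to a left adjoint of $f^*$, which is unique up to unique isomorphism; and every span decomposes as a composite of a restriction and a transfer. The principal obstacle in this approach is therefore not any individual calculation but the coherence bookkeeping, which is handled most efficiently by invoking the general universal property of the span bicategory rather than verifying each pseudofunctor axiom by hand.
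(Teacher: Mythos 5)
Your proof is correct and takes essentially the same route as the paper: the paper simply cites the universal property of the span bicategory (Dawson--Par\'e--Pronk, the ``\(\Span(\mathcal C)\) is freely generated by \(\mathcal C^{\op}\) subject to Beck--Chevalley'' principle you invoke at the end), with the hypotheses supplied by \cref{proposition: Beck condition} and \cref{lemma: product preserving}. Your additional explicit description of \(f_!r^*\) and the compositor is exactly what that theorem packages, so nothing is missing.
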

\begin{proof}
    By (the dual of) \cite[Theorem 2.8(3)]{DPP}, there exists an essentially unique pseudofunctor $\widehat{\cat{M}}_n^G$ with $\iota^*(\widehat{\cat{M}}_n^G) = \cat{M}^G_n$ if and only if $\cat{M}^G_n$ has the property that it sends every pullback of finite $G$-sets to a square in $\catsq$ with invertible Beck--Chevalley transformation.  We confirmed that $\cat{M}^G_n$ has this property in \cref{proposition: Beck condition}.  The fact that $\widehat{\cat{M}}^G_n$ preserves products follows at once from the second half of \cref{lemma: product preserving}.
\end{proof}

\cref{thm:main} is an immediate consequence of \cref{theorem: there is a pseudofunctor,corollary: from mackey functor of squares cats to spectral Mackey}.

\section{Lifting the equivariant Euler characteristic}\label{sec:Euler char}

The authors of \cite{SKG} show that the $H$-equivariant Euler characteristic lifts to maps of spectra $K^{\square}_G(\cat M^G_n)^H\to  K_G(\underline{\ZZ})^H$ for all $H\leq G$, where $K_G(\underline{\ZZ})$ is the equivariant algebraic $K$-theory of the constant coefficient system of $\ZZ$, as defined in \cite{linearization}.  Moreover, on $\pi_0$, the authors show these maps assemble into a map of $G$-Mackey functors.  The main theorem of this section says that these maps of spectra assemble coherently into a map of spectral Mackey functors.
\begin{theorem}\label{theorem: lift euler char}
    There is a map of genuine $G$-spectra $K^{\square}_G(\cat{M}^G_n)\to K_G(\underline{\ZZ})$ which recovers the map of \cite[Theorem B]{SKG} upon taking $H$-fixed points.
\end{theorem}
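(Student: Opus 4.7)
The plan is to invoke \cref{corollary: from mackey functor of squares cats to spectral Mackey}: a pseudonatural transformation between product-preserving pseudofunctors $\mathbb{B}^G \to \catsq$ yields a morphism of spectral Mackey functors after applying $K^\square_G$. The source is the pseudofunctor $\widehat{\cat{M}}_n^G$ of \cref{theorem: there is a pseudofunctor}. The proof then decomposes into three tasks: (a) realize $K_G(\underline{\ZZ})$ as the output of the $K^\square_G$ machine applied to some product-preserving pseudofunctor $\underline{\ZZ}^\bullet\colon \mathbb{B}^G\to \catsq$; (b) construct a pseudonatural transformation $\chi\colon \widehat{\cat{M}}_n^G \Rightarrow \underline{\ZZ}^\bullet$ that at each $G$-set encodes the Euler characteristic; and (c) verify that the resulting map of spectra agrees with the one of \cite[Theorem B]{SKG} upon taking $H$-fixed points.

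For (a), the construction of $K_G(\underline{\ZZ})$ in \cite{linearization} is modeled by a Mackey functor of permutative (indeed Waldhausen) categories whose value at $G/H$ is the category of finitely generated free $\ZZ$-modules with a suitable $H$-structure. By \cref{exmaple: wald is square}, each such Waldhausen category is canonically a category with squares with the expected $K^\square$, and the usual restriction/induction and change-of-coefficient functors are square functors. A bookkeeping check, along the same lines as \cref{proposition: Beck condition,theorem: there is a pseudofunctor}, promotes this data to a product-preserving pseudofunctor $\underline{\ZZ}^\bullet\colon \mathbb{B}^G\to \catsq$ whose associated spectral Mackey functor is equivalent to $K_G(\underline{\ZZ})$.

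For (b), I would define $\chi_X\colon \cat{M}_n^G(X) \to \underline{\ZZ}^X$ via an equivariant simplicial or cellular decomposition, as in \cite[Section~6]{SKG}; concretely, send $(M,f\colon M\to X)$ to (the class of) the associated equivariant chain complex of $\ZZ[X]$-modules. Preservation of distinguished squares follows from the inclusion-exclusion identity $\chi(B\cup_A C) = \chi(B) + \chi(C) - \chi(A)$, which is precisely the relation imposed by squares $K$-theory on $K_0^\square$ (\cref{rmk:extra conditions}). Pseudonaturality in $X$ reduces to compatibility of Euler-characteristic data with pullback along $r\colon X\to Y$ (restriction to preimages) and with $r_!$ (summation over fibers), which are standard and analogous to \cref{lemma: restriction preserves SK embeddings,lemma: pullback preserves squares,proposition: Beck condition}.

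For (c), applying $K^\square_G$ to $\chi$ via \cref{corollary: from mackey functor of squares cats to spectral Mackey} gives the map of genuine $G$-spectra. Under the equivalence \cref{proposition: equivalence of manifold squares cats}, the $H$-fixed point map is, by construction, the $K^\square$ of exactly the square functor used in \cite[Theorem B]{SKG}, and hence agrees with their map. The main obstacle I anticipate is step (a): the squares-categorical pseudofunctor $\underline{\ZZ}^\bullet$ must be produced \emph{and} compared to the linearization-spectrum model of \cite{linearization}, which requires verifying that the Beck--Chevalley conditions hold for $\underline{\ZZ}^\bullet$ and that the resulting spectral Mackey functor agrees with the known one. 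A secondary obstacle is ensuring that the Euler characteristic assembles into a genuine square functor (not just a $\pi_0$-level map); the cellular/chain complex model from \cite{SKG} should address this, but the pseudonatural coherence of $\chi$ across all of $\mathbb{B}^G$ will need to be checked carefully.
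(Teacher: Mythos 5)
Your overall strategy---produce the map by applying \cref{corollary: from mackey functor of squares cats to spectral Mackey} to a pseudonatural transformation out of $\widehat{\cat{M}}^G_n$---is the right one and matches the paper. However, your choice of target and of transformation contains a genuine gap. In step (b) you propose to define $\chi_X$ by sending $(M,f\colon M\to X)$ to ``the associated equivariant chain complex.'' This is not a functor on $\cat{M}^G_n(X)$: a cellular or simplicial chain complex depends on a choice of equivariant CW/triangulation structure, and such choices cannot be made naturally in $SK$-embeddings, so $\chi_X$ is not a square functor but only a well-defined assignment on $K_0^\square$ (where inclusion--exclusion suffices). You flag this as a ``secondary obstacle'' and suggest the cellular model of \cite{SKG} resolves it, but it does not: the map of \cite[Theorem B]{SKG} is \emph{not} built from a chain-complex functor on manifolds; it is built by factoring through equivariant $A$-theory. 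Concretely, the paper sends $(M,f)$ to the retractive $G$-space $M\amalg X\to X$---an assignment that \emph{is} strictly functorial and takes $SK$-pushouts to pushouts---obtaining a pseudonatural transformation $\cat{M}^G_n\Rightarrow \cat{R}^G$ into a pseudofunctor of retractive-space Waldhausen categories whose associated spectral Mackey functor is $A_G(*)$. The map to $K_G(\underline{\ZZ})$ is then obtained by composing with the linearization map $A_G(*)\to K_G(\underline{\ZZ})$, which is \emph{already} a map of genuine $G$-spectra by \cite[Theorem C]{linearization}; no squares-categorical model of $K_G(\underline{\ZZ})$ is needed. This also disposes of your step (a), which as written would require both constructing a pseudofunctor $\underline{\ZZ}^\bullet$ and proving a nontrivial comparison of its $K$-theory with the model of \cite{linearization}.

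Even granting a fix for functoriality (e.g.\ via singular chains in a Waldhausen category of homotopy-finite complexes), you would still owe a verification that your directly-constructed map agrees on $H$-fixed points with the composite
$K^{\square}(\Mfldsq)\to A_G(*)^H\to K_G(\underline{\ZZ})^H$
of \cite{SKG}, since that is how their Theorem~B map is defined; your step (c) asserts agreement ``by construction,'' but your construction is not theirs. The lesson is that the factorization through $A_G(*)$ is not an optional convenience here---it is the mechanism that makes the Euler characteristic spectrum-level rather than merely $\pi_0$-level.
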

 
Following \cite[Section 4.3]{SKG}, we will prove this result by constructing a map of genuine $G$-spectra $K^{\square}_G(\cat M^G_n)\to A_G(*)$, where $A_G(*)$ is the genuine equivariant $A$-theory of a point \cite{MalkiewichMerling:Atheory}, and then post-composing with the equivariant linearization map $L\colon A_G(*)\to K_G(\underline{\ZZ})$ from \cite{linearization}. In more detail, the maps $K^{\square}_G(\cat M^G_n)^H\to  K_G(\underline{\ZZ})^H$ constructed in \cite{SKG} factor as 
\[
    K^{\square}_G(\cat{M}^G_n)^H\xrightarrow{\sim} K^{\square}(\Mfld^{G,\partial}_n)\xrightarrow{} A_G(*)^H\xrightarrow{L^H} K_G(\underline{\ZZ})^H
\]
and our approach to \cref{theorem: lift euler char} is to show that the maps $\alpha_H\colon K^{\square}_G(\cat{M}_n^G)^H\to A_G(*)^H$ assemble into a map of genuine $G$-spectra.  

We begin by recalling the definition of the spectrum $A_G(*)$ and the maps $\alpha_H$.  For $H\leq G$, let $R^H$ denote the category of finitely dominated pointed $H$-spaces and equivariant continuous maps.  The category $R^H$ is a Waldhausen category with cofibrations and weak equivalences obtained from the usual genuine model structure on pointed $H$-spaces, and the spectrum $A_G(*)^H$ is defined to be the Waldhausen $K$-theory of $R^H$.

To more easily compare $A_G(*)$ with the $K^{\square}_G$-theory of $\cat M^G_n$, we introduce a variant of $R^H$ as follows. For any finite $G$-set $X$, let $\cat{R}^G(X)$ denote the category of retractive $G$-spaces over $X$.  This is also a Waldhausen category, with weak equivalence and cofibrations inherited from the model structure on retractive $G$-spaces (see \cite[Definition 4.13]{MalkiewichMerling:Atheory}). Mimicking the proof of \cref{proposition: equivalence of manifold squares cats} almost exactly, one proves the following result.

\begin{proposition}\label{proposition: parametrized retractive spaces}
    For any $H\leq G$, there is an exact equivalence of Waldhausen categories $\cat{R}^G(G/H)\simeq R^H$.
\end{proposition}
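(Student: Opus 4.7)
The plan is to mimic the proof of \cref{proposition: equivalence of manifold squares cats} almost verbatim, replacing smooth $G$-manifolds with retractive $G$-spaces and invoking the classical equivalence between retractive $G$-spaces over $G/H$ and pointed $H$-spaces given by the fiber-over-$eH$ / induction adjunction.

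First I would define $\Phi\colon \cat{R}^G(G/H)\to R^H$ by sending a retractive $G$-space $(X, p\colon X\to G/H, s\colon G/H\to X)$ to the pointed $H$-space $(p^{-1}(eH), s(eH))$, where the $H$-action is the restriction of the $G$-action (well-defined since $H$ stabilizes $eH$); on morphisms, $\Phi$ is restriction to the fiber. The essential inverse $\Psi\colon R^H\to \cat{R}^G(G/H)$ would be the usual induction: picking coset representatives $g_1=e, g_2, \ldots, g_m$ for $G/H$, set $\Psi(Y, y_0) = G\times_H Y$ with structure map $[g,y]\mapsto gH$ and section $gH\mapsto [g, y_0]$. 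As in the manifold proof, the natural isomorphism $\Phi\Psi\cong \mathrm{id}$ comes from $[e,y]\leftrightarrow y$, and $\Psi\Phi\cong \mathrm{id}$ is given by $[g,x]\mapsto g\cdot x$, which one verifies to be a $G$-homeomorphism over $G/H$ respecting sections by the same componentwise argument used in the manifold case. Finite domination is preserved in both directions, since induction carries finite $H$-CW complexes to finite $G$-CW complexes and restriction to a fiber carries finite $G$-CW structures to finite $H$-CW structures.

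Finally, I would verify exactness. Both functors preserve the zero object (the identity retraction $G/H \to G/H \to G/H$ corresponds to the one-point pointed $H$-space). The cofibrations and weak equivalences of retractive $G$-spaces are detected by underlying equivariant CW/fixed-point data, and the identification $(G\times_H Y)^K \cong \coprod_{[g]\in K\backslash G/H} Y^{g^{-1}Kg\cap H}$ shows directly that $\Psi$ preserves both cofibrations and fixed-point weak equivalences; similarly, restriction to a fiber preserves both. I expect this exactness verification to be the main technical point, though it is essentially a restatement of the standard Quillen equivalence between retractive $G$-spaces over $G/H$ and pointed $H$-spaces and is therefore routine.
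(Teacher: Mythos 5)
Your approach is precisely the paper's: the paper offers no separate argument, stating only that one mimics the proof of \cref{proposition: equivalence of manifold squares cats}, and your fiber-over-$eH$ functor $\Phi$ and induction functor $\Psi$ are exactly that mimicry, with the exactness and finite-domination checks filled in. One small correction to your exactness step: the fixed-point formula should be $(G\times_H Y)^K \cong \coprod_{gH\in (G/H)^K} Y^{g^{-1}Kg}$, where the coproduct runs only over the $K$-fixed cosets (for which necessarily $g^{-1}Kg\leq H$), rather than over all of $K\backslash G/H$ with exponent $g^{-1}Kg\cap H$ --- e.g.\ for $H=e$ and $K=G$ nontrivial your formula gives a nonempty fixed-point set for $G\times Y\to G$, which is wrong. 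The corrected formula still yields the conclusion that $\Psi$ preserves cofibrations and genuine weak equivalences, so the proof goes through.
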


Given a map of finite $G$-sets $r\colon X\to Y$, we then obtain an exact functor $r^*\colon \cat{R}^G(Y)\to \cat{R}^G(X)$, defined by pullback
\[
    \begin{tikzcd}
        r^*(M) \ar[r] \ar[d] & M \ar[d]\\
        X\ar[r,"r"] & Y 
    \end{tikzcd}
\]
for any retractive space $M\in \cat{R}^G(Y)$.  The functor $r^*$ always has a left adjoint $r_!\colon \cat{R}^G(X)\to \cat{R}^G(Y)$ defined via pushout
\[
\begin{tikzcd}
    X \ar[r,"r"] \ar[d] & Y \ar[d]\\
    N \ar[r] & r_!(N)
    \end{tikzcd}
\]
for any $N\in \cat{R}^G(X)$ (see \cite[Section 2.1]{MaySigurdsson} for that proof that these are in fact adjoint functors).  The following lemma follows from analysis similar to that in the proof of \cref{lemma: restriction preserves SK embeddings}.  We omit the proofs for brevity.

\begin{lemma}
    The functors $r^*$ and $r_!$ are exact functors of Waldhausen categories.
\end{lemma}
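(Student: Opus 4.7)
The plan is to verify the four defining conditions for an exact functor of Waldhausen categories---preservation of the zero object, cofibrations, weak equivalences, and pushouts along cofibrations---separately for $r_!$ and $r^*$, treating the two functors somewhat differently.

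For $r_!$, most of the work is done by the adjunction $r_! \dashv r^*$: being a left adjoint, $r_!$ automatically preserves all colimits and hence the initial object (which is the zero object in retractive spaces) and all pushouts. What remains is preservation of cofibrations and weak equivalences. Here I would use the explicit presentation of $r_!(N)$ as the pushout $Y \cup_X N$ of the structure map $N \to X$ with $r\colon X \to Y$. The structure map $X \to N$ is a cofibration by the definition of a retractive space, so this is a pushout along a cofibration, and the standard gluing lemma for the genuine model structure on $G$-spaces gives preservation of both cofibrations and weak equivalences.

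For $r^*$ the approach is different, because a right adjoint need not preserve pushouts. The key observation is that a map $r\colon X \to Y$ between finite (hence discrete) $G$-sets is particularly simple, and $r^*(M) = X \times_Y M$ admits a fiberwise description
\[
    r^*(M) \cong \coprod_{x \in X} M_{r(x)},
\]
where $M_y$ denotes the fiber of the structure map $M \to Y$ over $y$; the analogous description applies to the section and retraction. Preservation of the zero object is automatic from $r^*$ being a right adjoint. Since cofibrations and weak equivalences of retractive $G$-spaces are detected on underlying $G$-spaces, and since fixed points commute with the pullback along the map of finite $G$-sets, this ``fiber relabeling'' operation clearly preserves both classes.

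The only genuinely non-formal step, and thus the main obstacle, is showing that $r^*$ preserves pushouts along cofibrations. My plan is to exploit the same fiberwise decomposition: any pushout in $\cat{R}^G(Y)$ can be computed fiber-wise over each $y \in Y$ because colimits of retractive spaces over a discrete base are computed at the level of fibers. Applying $r^*$ then merely duplicates these fibers according to the finite-to-one rule given by $r$, and this relabeling tautologically commutes with fiber-wise pushouts. Hence pushout squares along cofibrations are sent to pushout squares along cofibrations, completing the verification.
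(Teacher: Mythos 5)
Your argument is correct, and for $r^*$ it is essentially the route the paper gestures at: the paper omits this proof entirely, saying only that it ``follows from analysis similar to'' the proof that $r^*$ preserves $SK$-embeddings, and that proof is exactly the fiberwise decomposition $r^*(M)\cong\coprod_{x\in X}M_{r(x)}$ over the discrete base that you use. Your treatment of $r^*$-preservation of pushouts is a hands-on version of the observation the paper makes elsewhere (in the proof sketch of the analogous lemma for $\cat{M}^G_n$) that $r^*$ along a map of finite $G$-sets is both a left and a right adjoint, hence preserves all colimits; either justification is fine. Where your write-up genuinely adds content is the case of $r_!$: unlike the situation for $\cat{M}^G_n$, where $r_!$ merely relabels the structure map and ``evidently'' preserves everything, for retractive spaces $r_!(N)=Y\cup_X N$ is an honest pushout, so preservation of weak equivalences is not automatic and really does require left properness/the gluing lemma applied along the cofibration $X\rightarrowtail N$ --- a point the paper's one-line gloss does not address. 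Two small precisions: the section $X\rightarrowtail N$ is a cofibration not ``by the definition of a retractive space'' but because the objects of the Waldhausen category are required to have cofibrant section (equivalently, by Waldhausen's axiom that $0\rightarrowtail A$ is always a cofibration); and the gluing lemma itself only gives preservation of weak equivalences --- preservation of cofibrations by $r_!$ follows instead from the pasting identity $Y\cup_X N'\cong (Y\cup_X N)\cup_N N'$ together with stability of cofibrations under cobase change. Neither point is a gap, just attribution.
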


Just as for $\cat{M}^G_n$, it is straightforward to see that the assignment $X\mapsto \cat{R}^G(X)$ is a pseudofunctor $(\Fin^G)^{\op}\to \Wald$.  Extending along the inclusion of Waldhausen categories into categories with squares, via \cref{exmaple: wald is square}, we obtain a product preserving pseudofunctor $\cat{R}^G \colon (\Fin^G)^{\op}\to \catsq$.
\begin{proposition}\label{proposition: there is another pseudofunctor}
    The functor $\cat{R}^G\colon (\Fin^G)^{\op}\to \catsq$ extends, essentially uniquely, to a product preserving pseudofunctor $\widehat{\cat{R}}^G\colon \mathbb{B}^G\to \catsq$.
\end{proposition}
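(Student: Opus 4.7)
The plan is to mimic the proof of \cref{theorem: there is a pseudofunctor}. Namely, we invoke the dual of \cite[Theorem 2.8(3)]{DPP}, which reduces the existence of the essentially unique extension $\widehat{\cat{R}}^G$ to checking two conditions on $\cat{R}^G$: first, that it sends disjoint unions of finite $G$-sets to products in $\catsq$, and second, that for every pullback square of finite $G$-sets the induced Beck--Chevalley transformation $g_!f^* \Rightarrow k^*h_!$ (built from the unit and counit of the adjunctions $r_!\dashv r^*$) is a natural isomorphism in $\catsq$. The fact that $\widehat{\cat{R}}^G$ is then product-preserving will follow from the first condition, exactly as in \cref{theorem: there is a pseudofunctor}.

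Product preservation is the easier step. For a decomposition $X = X_1 \sqcup X_2$ of finite $G$-sets, the inclusions $X_i \hookrightarrow X$ induce exact functors $\cat{R}^G(X)\to \cat{R}^G(X_i)$ by pullback, and a retractive $G$-space over $X$ is determined by its restriction to the components over $X_1$ and $X_2$. This yields an equivalence $\cat{R}^G(X)\simeq \cat{R}^G(X_1)\times \cat{R}^G(X_2)$ of Waldhausen categories, hence (via \cref{exmaple: wald is square}) of categories with squares. This is the direct analog of \cref{lemma: product preserving}.

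The main content of the proposition is the Beck--Chevalley condition, which is the retractive analog of \cref{proposition: Beck condition}. Fix a pullback square of finite $G$-sets
\[
\begin{tikzcd}
A \ar[r,"f"] \ar[d,"g"'] & B \ar[d,"h"]\\
C \ar[r,"k"'] & D
\end{tikzcd}
\]
and a retractive $G$-space $M$ over $B$. Recall $f^*(M) = A\times_B M$ (a pullback of $G$-spaces) and $g_!(-) = C \cup_{A} (-)$ is pushout along the section map, and similarly for $h_!$ and $k^*$. I would check that the Beck--Chevalley map $g_!f^*(M) \to k^*h_!(M)$ is an isomorphism by computing both sides fiberwise over $C$: since $C$ and $D$ are discrete $G$-sets, a retractive $G$-space over them is determined by its fibers, and the finite coproduct decompositions of all four $G$-sets in the square reduce the question to a compatibility of wedges of fibers of $M$ indexed by preimages. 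The key observation that makes everything work is that pullback along a map of finite discrete $G$-sets preserves pushouts, since such a pullback functor decomposes over components and is both a left and a right adjoint (mirroring the argument sketched for \cref{lemma: pullback preserves squares}). Unwinding the unit/counit formula for $\beta$ as in the proof of \cref{proposition: Beck condition} gives the desired identification. With both conditions verified, the DPP theorem produces the essentially unique product-preserving extension $\widehat{\cat{R}}^G\colon \mathbb{B}^G \to \catsq$.

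The main obstacle, as in the manifold case, is the Beck--Chevalley verification; however, because all pullbacks in question are along maps of discrete $G$-sets and because retractive $G$-spaces behave well under such pullbacks and pushouts, the calculation is essentially combinatorial and is simpler than the manifold computation, since we do not have to worry about smooth structure.
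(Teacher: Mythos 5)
Your proposal is correct and follows essentially the same route as the paper: both invoke the dual of \cite[Theorem 2.8(3)]{DPP} and reduce to the invertibility of the Beck--Chevalley transformation, which the paper notes can be checked either directly (as you do, via the fiberwise/componentwise analysis mirroring \cref{proposition: Beck condition}) or by citing \cite[Proposition 2.2.11]{MaySigurdsson}. Your additional verification of product preservation matches the unstated analogue of \cref{lemma: product preserving} that the paper assumes in the surrounding text.
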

\begin{proof}
    As in the proof of \cref{theorem: there is a pseudofunctor}, this follows from \cite[Theorem 2.8(3)]{DPP} as soon as we know the the Beck--Chevalley transformation is a natural isomorphism.  One can either prove this directly, following the proof of \cref{proposition: Beck condition}, or simply refer to \cite[Proposition 2.2.11]{MaySigurdsson}.
\end{proof}

Combining \cref{corollary: from mackey functor of squares cats to spectral Mackey,proposition: there is another pseudofunctor} yields a spectral Mackey functor we denote $A^{\square}_G(*)$.  The equivalences of \cref{proposition: parametrized retractive spaces} show that the fixed points of this spectrum agree with those of the fixed points of equivariant $A$-theory of Malkiewich--Merling. Moreover, through these equivalences the restriction, transfer, and conjugation maps on either spectral Mackey functor are identified.

\begin{proposition}
    The spectrum $A^{\square}_G(*)$ is isomorphic in the equivariant stable homotopy category to $A_G(*)$.
\end{proposition}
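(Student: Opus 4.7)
The plan is to exhibit a levelwise equivalence between $A^\square_G(*)$ and $A_G(*)$ which is compatible with the Mackey structure, so that the Guillou--May/Barwick identification of the equivariant stable homotopy category with $\Mack_G(\Sp)$ delivers the claimed isomorphism.

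First, for each $H \leq G$, combining \cref{proposition: parametrized retractive spaces} with Waldhausen's theorem (\cref{exmaple: wald is square}) produces a chain of equivalences of connective spectra
\[
A^\square_G(*)^H = K^\square(\cat{R}^G(G/H)) \simeq K^\square(R^H) \simeq K(R^H) = A_G(*)^H.
\]
Functoriality of these equivalences in $H$ will give the levelwise comparison.

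Second, I would verify that these fixed-point equivalences are compatible with restrictions, transfers, and conjugations. On $A^\square_G(*)$, the restriction along $H \leq K$ arises by applying $K^\square$ to the pullback $r^*\colon \cat{R}^G(G/K)\to \cat{R}^G(G/H)$ where $r\colon G/H \to G/K$ is the canonical projection, and the transfer arises by applying $K^\square$ to $r_!$. Under \cref{proposition: parametrized retractive spaces}, whose proof is analogous to that of \cref{proposition: equivalence of manifold squares cats}, the functor $r^*$ corresponds to the usual forgetful functor $R^K \to R^H$ and $r_!$ to the induction functor; these are precisely the functors used by Malkiewich--Merling to produce the Mackey structure on $A_G(*)$. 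The analogous identifications for conjugation maps are routine.

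The main obstacle is that the Malkiewich--Merling construction packages $A_G(*)$ using an equivariant infinite loop space machine rather than the spectral Mackey functor formalism. To complete the proof, one appeals to a comparison result (in the spirit of \cite{BohmannOsorno1} and \cite{GuillouMay}) which shows that the genuine $G$-spectrum produced from the categories $\{R^H\}_{H\leq G}$ together with the restriction and induction functors agrees with the spectral Mackey functor assembled from the same data. Once this comparison is in hand, the compatibility of structure maps upgrades the levelwise equivalences above into an equivalence of objects in $\Mack_G(\Sp)$, which is what is required.
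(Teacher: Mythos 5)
Your proposal is correct and follows essentially the same route as the paper's own (sketched) argument: replace squares $K$-theory by Waldhausen $K$-theory via \cref{exmaple: wald is square}, identify the fixed points through $\cat{R}^G(G/H)\simeq R^H$, and reduce to a comparison between the $\infty$-categorical spectral Mackey functor machine and the multicategorical Malkiewich--Merling construction. The paper likewise treats that final comparison as the genuine technical content and defers it to forthcoming work (citing the Segal $K$-theory analogue in \cite{CCP}), so your identification of it as the main obstacle is exactly right.
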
\begin{proof}[Proof sketch]
     By \cref{exmaple: wald is square}, we could equivalently model $A^{\square}_G(*)$ using Waldhausen $K$-theory instead of squares $K$-theory. In particular, we can observe that $\cat R^G$ factors through Waldhausen categories, and therefore we may post-compose with Waldhausen's $K$-theory functor to obtain a spectral Mackey functor which is equivalent to $A^{\square}_G(*)$. The remaining technical piece of the argument is then to compare this construction, which is done using $\infty$-categories, with the construction of Malkiewich--Merling which uses multicategories. The fact that the $\infty$-categorical and multicategorical constructions agree for equivariant Waldhausen $K$-theory will appear in forthcoming work of the authors and should not be surprising, since the two approaches are essentially the same (this is shown, for instance, for Segal $K$-theory in \cite{CCP}).
\end{proof}

Finally, observe that a pair $(M,f\colon M\to X)\in \cat{M}^G_n(X)$ determines a retractive $G$-space 
\[
    M\amalg X\xrightarrow{f\amalg \mathrm{id}_X} X
\]
where the section is given by the inclusion $X\to M\amalg X$. This assignment extends to a functor $\alpha_X\colon \cat{M}^G_n(X)\to \cat{R}^G(X)$ which preserves pushouts. Because $SK$-embeddings are cofibrations, this is a functor of categories with squares.  Moreover, by the very definition of the restrictions, we have $\alpha_X \circ r^* = r^*\circ \alpha_Y$ for any equivariant map $r\colon X\to Y$. Thus we have a pseudonatural transformation of pseudofunctors $\cat{M}^G_n\Rightarrow \cat{R}^G$. 

\begin{proposition}
    The pseudonatural transformation $\alpha$ extends to a pseudonatural transformation $\widehat{\alpha}\colon \widehat{\cat{M}}^G_n\Rightarrow \widehat{\cat{R}}^G$.
\end{proposition}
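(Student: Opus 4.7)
The plan is to exhibit $\widehat{\alpha}$ via the same 2-categorical universal property from \cite[Theorem 2.8(3)]{DPP} that produced $\widehat{\cat M}^G_n$ and $\widehat{\cat R}^G$ from their restrictions along $\iota$. The relevant (1-morphism) upgrade of that correspondence asserts that a pseudonatural transformation $\widehat{\alpha}\colon \widehat{\cat M}^G_n\Rightarrow\widehat{\cat R}^G$ restricting to $\alpha$ along $\iota$ exists essentially uniquely, provided that the mate 2-cell associated to each transfer is invertible. On objects we therefore set $\widehat{\alpha}_X := \alpha_X$, and the task reduces to verifying this mate condition.

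For any equivariant map $r\colon X\to Y$, the natural isomorphism $\alpha_X\circ r^* \cong r^*\circ \alpha_Y$ coming from the pseudonaturality of $\alpha$ is in fact a canonical one, since both composites send $(N,h\colon N\to Y)\in\cat M^G_n(Y)$ to the retractive $G$-space $(N\times_Y X)\amalg X\to X$ with the evident section. Taking the mate under the adjunctions $r_!\dashv r^*$ in $\cat M^G_n$ and $\cat R^G$ produces a natural transformation $\mu_r\colon \alpha_Y\circ r_!\Rightarrow r_!\circ\alpha_X$, and we claim this is a natural isomorphism. Indeed, on $(M,f\colon M\to X)\in \cat M^G_n(X)$, the functor $\alpha_Y\circ r_!$ yields the retractive space $M\amalg Y\to Y$ with section the inclusion of $Y$, while $r_!\circ \alpha_X$ yields the pushout of $M\amalg X \hookleftarrow X \xrightarrow{r} Y$, which is again $M\amalg Y$ with the same retractive structure. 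Unpacking the mate formula in terms of the explicit units and counits recalled in \cref{sec:SKG example} identifies $\mu_r$ with this canonical isomorphism.

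With the mate condition established, the 1-morphism enhancement of the universal property produces $\widehat{\alpha}$. The hardest step I foresee is actually justifying this enhancement: if it is not explicit in \cite{DPP}, then $\widehat{\alpha}$ must be constructed directly. Concretely, one defines the structure 2-cell of $\widehat{\alpha}$ on a general span $[X\xleftarrow{g}A\xrightarrow{f}Y]$ as the pasting of $\alpha_g$ with $\mu_f$, and then verifies the pseudonaturality axioms. The main check is compatibility with horizontal composition of spans, which reduces to showing that the mates $\mu_r$ paste coherently with the Beck--Chevalley isomorphisms on both sides; the needed inputs are \cref{proposition: Beck condition} for $\cat M^G_n$ and its analog for $\cat R^G$, together with the standard fact that taking mates is compatible with pasting of 2-cells. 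No further geometric input is required beyond the pushout computation above.
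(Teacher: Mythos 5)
Your proposal is correct and follows essentially the same route as the paper: the paper invokes the morphism-level part of the same result of Dawson--Par\'e--Pronk (their Theorem 2.8(2), so the ``1-morphism enhancement'' you worry about is indeed explicit there), reducing everything to checking that the mate $\alpha_Y\circ r_!\Rightarrow r_!\circ\alpha_X$ is invertible. The paper's verification is the same as yours, phrased slightly more abstractly: since $\alpha_X\circ r^*=r^*\circ\alpha_Y$ strictly and $r_!$ on retractive spaces agrees with $r_!$ on manifolds after adding the disjoint basepoint, the mate is the identity by the triangle identities, which is exactly your pushout computation of $M\amalg Y$.
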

\begin{proof}
    By \cite[Theorem Theorem 2.8(2)]{DPP} it suffices to check that for any map of finite $G$-sets $r\colon X\to Y$ that the Beck--Chevalley transformation
\[\begin{tikzcd}
	{\cat{M}^G_n(X)} & {\cat{M}^G_n(X)} & {\cat{R}^G(X)} & {\cat{R}^G(X)} \\
	{\cat{M}^G_n(Y)} & {\cat{M}^G_n(Y)} & {\cat{R}^G(Y)} & {\cat{R}^G(Y)}
	\arrow[""{name=0, anchor=center, inner sep=0}, equals, from=1-1, to=1-2 ]
	\arrow["{r_!}"', from=1-1, to=2-1]
	\arrow["{\alpha_X}", from=1-2, to=1-3]
	\arrow[""{name=1, anchor=center, inner sep=0}, equals, from=1-3, to=1-4]
	\arrow["{r_!}", from=1-4, to=2-4]
	\arrow[""{name=2, anchor=center, inner sep=0}, equals, from=2-1, to=2-2]
	\arrow["{r^*}", from=2-2, to=1-2]
	\arrow["{\alpha_Y}", from=2-2, to=2-3]
	\arrow["{r^*}", from=2-3, to=1-3]
	\arrow[""{name=3, anchor=center, inner sep=0}, equals, from=2-3, to=2-4]
	\arrow["\eta"', Rightarrow, from=0, to=2, shorten  =4mm]
	\arrow["\epsilon",  Rightarrow, from=1, to=3,shorten  =4mm]
\end{tikzcd}\]
    is a natural isomorphism.  But since the functors $r_!$ and $r^*$ on $\cat{M}^G_n$ and $\cat{R}^G$ coincide after the addition of disjoint basepoints via $\alpha$ this natural transformation is the identity, by the triangle identities for the adjunction $r_!\dashv r^*$ between $\cat{R}^G(X)$ and $\cat{R}^G(Y)$.
\end{proof}
Applying \cref{corollary: from mackey functor of squares cats to spectral Mackey}, we obtain a map of genuine $G$-spectra
\[
    \alpha\colon K^{\square}_G(\cat{M}^G_n)\to K^{\square}_G(\cat{R}^G) \simeq A_G(*)
\]
which, upon taking $H$-fixed points, recovers the maps $\alpha_H$.  Thus, extending along the linearization map $A_G(*)\to K_G(\underline{\mathbb{Z}})$ from \cite[Theorem C]{linearization}, this proves \cref{theorem: lift euler char}.

\printbibliography

@article {BohmannOsorno1,
    AUTHOR = {Bohmann, Anna Marie and Osorno, Ang\'elica},
     TITLE = {Constructing equivariant spectra via categorical {M}ackey
              functors},
   JOURNAL = {Algebr. Geom. Topol.},
  FJOURNAL = {Algebraic \& Geometric Topology},
    VOLUME = {15},
      YEAR = {2015},
    NUMBER = {1},
     PAGES = {537--563},
      ISSN = {1472-2747,1472-2739},
   MRCLASS = {55P42 (18D20 18G55 55P91)},
  MRNUMBER = {3325747},
MRREVIEWER = {J.\ P. C. Greenlees},
       DOI = {10.2140/agt.2015.15.537},
       URL = {https://doi.org/10.2140/agt.2015.15.537},
}

@incollection {Duskin,
    AUTHOR = {Duskin, John W.},
     TITLE = {Simplicial matrices and the nerves of weak {$n$}-categories.
              {I}. {N}erves of bicategories},
      NOTE = {CT2000 Conference (Como)},
   JOURNAL = {Theory Appl. Categ.},
  FJOURNAL = {Theory and Applications of Categories},
    VOLUME = {9},
      YEAR = {2002},
     PAGES = {198--308},
      ISSN = {1201-561X},
   MRCLASS = {18D05 (18G30 55P15 55U10)},
  MRNUMBER = {1897816},
MRREVIEWER = {R.\ H.\ Street},
}

@article {GuillouMay,
    AUTHOR = {Guillou, Bertrand J. and May, J. Peter},
     TITLE = {Models of {$G$}-spectra as presheaves of spectra},
   JOURNAL = {Algebr. Geom. Topol.},
  FJOURNAL = {Algebraic \& Geometric Topology},
    VOLUME = {24},
      YEAR = {2024},
    NUMBER = {3},
     PAGES = {1225--1275},
      ISSN = {1472-2747,1472-2739},
   MRCLASS = {55P42 (55P48 55P91 55P92)},
  MRNUMBER = {4767872},
MRREVIEWER = {David\ Barnes},
       DOI = {10.2140/agt.2024.24.1225},
       URL = {https://doi.org/10.2140/agt.2024.24.1225},
}

@incollection{waldhausen,
    address = {Berlin, Heidelberg},
    title = {Algebraic {K}-theory of spaces},
    volume = {1126},
    isbn = {9783540152354},
    url = {http://link.springer.com/10.1007/BFb0074449},
    booktitle = {Algebraic and {Geometric} {Topology}},
    publisher = {Springer Berlin Heidelberg},
    author = {Waldhausen, Friedhelm},
    editor = {Ranicki, Andrew and Levitt, Norman and Quinn, Frank},
    year = {1985},
    doi = {10.1007/BFb0074449},
    pages = {318--419}
}

@article{HMMRS,
    title = {Cut and paste invariants of manifolds via algebraic {K}-theory},
    volume = {316},
    issn = {01668641},
    url = {https://linkinghub.elsevier.com/retrieve/pii/S0166864122001079},
    doi = {10.1016/j.topol.2022.108105},
    journal = {Topology and its Applications},
    author = {Hoekzema, Renee S. and Merling, Mona and Murray, Laura and Rovi, Carmen and Semikina, Julia},
    month = jul,
    year = {2022},
    pages = {108105},
}

@misc{kerodon,
  author       = {Jacob Lurie},
  title        = {Kerodon},
  howpublished = {\url{https://kerodon.net}},
  year         = {2025},
}

@article {Lindner,
    AUTHOR = {Lindner, Harald},
     TITLE = {A remark on {M}ackey-functors},
   JOURNAL = {Manuscripta Math.},
  FJOURNAL = {Manuscripta Mathematica},
    VOLUME = {18},
      YEAR = {1976},
    NUMBER = {3},
     PAGES = {273--278},
      ISSN = {0025-2611,1432-1785},
   MRCLASS = {18A25},
  MRNUMBER = {401864},
MRREVIEWER = {R.\ H.\ Street},
       DOI = {10.1007/BF01245921},
       URL = {https://doi.org/10.1007/BF01245921},
}

@article {MalkiewichMerling:Atheory,
    AUTHOR = {Malkiewich, Cary and Merling, Mona},
     TITLE = {Equivariant {$A$}-theory},
   JOURNAL = {Doc. Math.},
  FJOURNAL = {Documenta Mathematica},
    VOLUME = {24},
      YEAR = {2019},
     PAGES = {815--855},
      ISSN = {1431-0635,1431-0643},
   MRCLASS = {19D10 (18D50 19C99 55P91)},
  MRNUMBER = {3982285},
MRREVIEWER = {No\'e\ B\'arcenas Torres},
}

@article {SKG,
    AUTHOR = {Merling, Mona and Ng, Ming and Semikina, Julia and Send\'on
              Blanco, Alba and Williams, Lucas},
     TITLE = {Scissors congruence {$K$}-theory for equivariant manifolds},
   JOURNAL = {Bull. Lond. Math. Soc.},
  FJOURNAL = {Bulletin of the London Mathematical Society},
    VOLUME = {58},
      YEAR = {2026},
    NUMBER = {1},
     PAGES = {Paper No. e70181, 22},
      ISSN = {0024-6093,1469-2120},
   MRCLASS = {19D55 (19A49 19D10 55P91 55S91 57R91)},
  MRNUMBER = {5019859},
       DOI = {10.1112/blms.70181},
       URL = {https://doi-org.proxy2.cl.msu.edu/10.1112/blms.70181},
}

@article {DPP,
    AUTHOR = {Dawson, Robert and Par\'e, Robert and Pronk, Dorette},
     TITLE = {The span construction},
   JOURNAL = {Theory Appl. Categ.},
  FJOURNAL = {Theory and Applications of Categories},
    VOLUME = {24},
      YEAR = {2010},
     PAGES = {No. 13, 302--377},
      ISSN = {1201-561X},
   MRCLASS = {18A40 (18C20 18D05)},
  MRNUMBER = {2720187},
}

@article {CKMZ:squares,
    AUTHOR = {Campbell, Jonathan and Kuijper, Josefien and Merling, Mona and
              Zakharevich, Inna},
     TITLE = {Algebraic {K}-theory for squares categories},
   JOURNAL = {Ann. K-Theory},
  FJOURNAL = {Annals of K-Theory},
    VOLUME = {11},
      YEAR = {2026},
    NUMBER = {1},
     PAGES = {1--36},
      ISSN = {2379-1683,2379-1691},
   MRCLASS = {19D99 (18F99)},
  MRNUMBER = {5029240},
       DOI = {10.2140/akt.2026.11.1},
       URL = {https://doi-org.proxy2.cl.msu.edu/10.2140/akt.2026.11.1},
}

@article{barwick_spectral_2017,
    title = {Spectral {Mackey} functors and equivariant algebraic {K}-theory ({I})},
    volume = {304},
    issn = {00018708},
    url = {https://linkinghub.elsevier.com/retrieve/pii/S0001870816311434},
    doi = {10.1016/j.aim.2016.08.043},
    journal = {Advances in Mathematics},
    author = {Barwick, Clark},
    month = jan,
    year = {2017},
    pages = {646--727},
}

@article{barwick_spectral_2020,
    title = {Spectral {Mackey} functors and equivariant algebraic {K}-theory, {II}},
    volume = {2},
    issn = {2576-7666, 2576-7658},
    url = {https://msp.org/tunis/2020/2-1/p03.xhtml},
    doi = {10.2140/tunis.2020.2.97},
    number = {1},
    journal = {Tunisian Journal of Mathematics},
    author = {Barwick, Clark and Glasman, Saul and Shah, Jay},
    month = jan,
    year = {2020},
    pages = {97--146},
}

@article {CMNN,
    AUTHOR = {Clausen, Dustin and Mathew, Akhil and Naumann, Niko and Noel,
              Justin},
     TITLE = {Descent and vanishing in chromatic algebraic {$K$}-theory via
              group actions},
   JOURNAL = {Ann. Sci. \'Ec. Norm. Sup\'er. (4)},
  FJOURNAL = {Annales Scientifiques de l'\'Ecole Normale Sup\'erieure.
              Quatri\`eme S\'erie},
    VOLUME = {57},
      YEAR = {2024},
    NUMBER = {4},
     PAGES = {1135--1190},
      ISSN = {0012-9593,1873-2151},
   MRCLASS = {18N60 (19D10 19L47)},
  MRNUMBER = {4773302},
}

@article {DK2,
    AUTHOR = {Dwyer, W. G. and Kan, D. M.},
     TITLE = {Calculating simplicial localizations},
   JOURNAL = {J. Pure Appl. Algebra},
  FJOURNAL = {Journal of Pure and Applied Algebra},
    VOLUME = {18},
      YEAR = {1980},
    NUMBER = {1},
     PAGES = {17--35},
      ISSN = {0022-4049,1873-1376},
   MRCLASS = {55U35 (18D20)},
  MRNUMBER = {578563},
MRREVIEWER = {Timothy\ Porter},
       DOI = {10.1016/0022-4049(80)90113-9},
       URL = {https://doi-org.proxy2.cl.msu.edu/10.1016/0022-4049(80)90113-9},
}

@article {DK3,
    AUTHOR = {Dwyer, W. G. and Kan, D. M.},
     TITLE = {Function complexes in homotopical algebra},
   JOURNAL = {Topology},
  FJOURNAL = {Topology. An International Journal of Mathematics},
    VOLUME = {19},
      YEAR = {1980},
    NUMBER = {4},
     PAGES = {427--440},
      ISSN = {0040-9383},
   MRCLASS = {55U35 (55U10)},
  MRNUMBER = {584566},
MRREVIEWER = {Edgar\ H.\ Brown, Jr.},
       DOI = {10.1016/0040-9383(80)90025-7},
       URL = {https://doi-org.proxy2.cl.msu.edu/10.1016/0040-9383(80)90025-7},
}

@phdthesis{NardinThesis,
    AUTHOR = {Nardin, Denis},
     TITLE = {Stability and distributivity over orbital {$\infty$}-categories},
    SCHOOL = {Massachusetts Institute of Technology},
 PUBLISHER = {ProQuest LLC, Ann Arbor, MI},
      YEAR = {2017},
   MRCLASS = {99-05},
  MRNUMBER = {3781929},
       URL =
              {http://gateway.proquest.com/openurl?url_ver=Z39.88-2004&rft_val_fmt=info:ofi/fmt:kev:mtx:dissertation&res_dat=xri:pqm&rft_dat=xri:pqdiss:10797490},
}

@misc{callesarazola,
      title={Squares {K}-theory and 2-{S}egal spaces}, 
      author={M. E. Calle and M. Sarazola},
      year={2026},
      archivePrefix={arXiv},
      primaryClass={math.KT}, 
      note = {To appear in \textit{Ann. K-Theory}.}
}

@article {Hinich,
    AUTHOR = {Hinich, Vladimir},
     TITLE = {Dwyer-{K}an localization revisited},
   JOURNAL = {Homology Homotopy Appl.},
  FJOURNAL = {Homology, Homotopy and Applications},
    VOLUME = {18},
      YEAR = {2016},
    NUMBER = {1},
     PAGES = {27--48},
      ISSN = {1532-0073,1532-0081},
   MRCLASS = {55U35 (18D20)},
  MRNUMBER = {3460765},
MRREVIEWER = {Philippe\ Gaucher},
       DOI = {10.4310/HHA.2016.v18.n1.a3},
       URL = {https://doi-org.proxy2.cl.msu.edu/10.4310/HHA.2016.v18.n1.a3},
}

@BOOK{KKNO73,
  title = {Cutting and pasting of manifolds; $SK$-groups},
  publisher = {Publish or Perish, Inc.},
  year = {1973},
  author = {U. Karras and M. Kreck and W.D. Neumann and E. Ossa},
  series = {Mathematics Lecture Series},
  number = {1},
  location = {Boston, MA},
}

@misc{linearization,
      title={A linearization map for genuine equivariant algebraic $K$-theory}, 
      author={Maxine Calle and David Chan and Andres Mejia},
      note = {to appear in \textit{Algebr. Geom. Topol.}},
      year={2026},      
}

@article{Zak-perspectives,
	author = {Zakharevich, Inna},
	fjournal = {American Mathematical Society. Bulletin. New Series},
	journal = {Bull. Amer. Math. Soc. (N.S.)},
	mrclass = {52B45},
	mrnumber = {3474308},
	mrreviewer = {Satoshi Mochizuki},
	number = {2},
	pages = {269--294},
	title = {Perspectives on scissors congruence},
	volume = {53},
	year = {2016},
}

@book {MaySigurdsson,
    AUTHOR = {May, J. P. and Sigurdsson, J.},
     TITLE = {Parametrized homotopy theory},
    SERIES = {Mathematical Surveys and Monographs},
    VOLUME = {132},
 PUBLISHER = {American Mathematical Society, Providence, RI},
      YEAR = {2006},
     PAGES = {x+441},
   MRCLASS = {55P42 (19L99 55N20 55N22 55P91)},
  MRNUMBER = {2271789},
MRREVIEWER = {A.\ A.\ Ranicki},
       DOI = {10.1090/surv/132},
       URL = {https://doi.org/10.1090/surv/132},
}

@article{kosniowski,
    author = {Czes Kosniowski},
    title = {Actions of finite abelian groups},
    journal = {Research Notes in Mathematics},
    vol = {18},
    publisher = {Pitman (Advanced Publishing Program),},
    location = {Boston, Mass.-London},
    MRNUMBER = {518871},
    year = {1978}
}

@article {BFB,
    AUTHOR = {Bullejos, M. and Faro, E. and Blanco, V.},
     TITLE = {A full and faithful nerve for 2-categories},
   JOURNAL = {Appl. Categ. Structures},
  FJOURNAL = {Applied Categorical Structures. A Journal Devoted to
              Applications of Categorical Methods in Algebra, Analysis,
              Computer Science, Logic, Order and Topology},
    VOLUME = {13},
      YEAR = {2005},
    NUMBER = {3},
     PAGES = {223--233},
      ISSN = {0927-2852,1572-9095},
   MRCLASS = {18D05 (18G30 55P15)},
  MRNUMBER = {2167791},
MRREVIEWER = {R.\ H.\ Street},
       DOI = {10.1007/s10485-005-2957-6},
       URL = {https://doi.org/10.1007/s10485-005-2957-6},
}

@misc{CCP,
      title={Equivariant algebraic $K$-theory of symmetric monoidal Mackey functors}, 
      author={Maxine Calle and David Chan and Maximilien Péroux},
      year={2025},
      eprint={2312.04705},
      archivePrefix={arXiv},
      primaryClass={math.AT},
      url={https://arxiv.org/abs/2312.04705}, 
}

@phdthesis{rowlett,
    author = {Russell Johnston Rowlett},
    title = {Additive invariants of manifolds with boundary},
    school = {University of
Virginia},
    year = {1970}
}

@article{hara-koshikawa,
    author = {Tamio Hara and Hiroaki Koshikawa},
    title = {Cutting and pasting of {$G$}-manifolds with boundary},
    journal = {Kyushu Journal
of Mathematics},
    volume = {51},
    number = {1},
    pages = {165--178},
    year = {1997}
}

@article{komiya,
    author = {Katsuhiro Komiya},
    title = {Cutting and pasting of manifolds into {$G$}-manifolds},
    journal = {Kodai Mathematical Journal},
    volume = {26},
    number = {2},
    pages = {230--243},
    year = {2003},
}

@misc{MRS,
      title={Parametrized scissors congruence $K$-theory of manifolds and cobordism categories}, 
      author={Mona Merling and George Raptis and Julia Semikina},
      year={2025},
      eprint={2504.01810},
      archivePrefix={arXiv},
      primaryClass={math.AT},
      url={https://arxiv.org/abs/2504.01810}, 
}

@article{Thomason:82,
AUTHOR = {Thomason, Robert W.},
     TITLE = {First quadrant spectral sequences in algebraic {$K$}-theory
              via homotopy colimits},
   JOURNAL = {Comm. Algebra},
  FJOURNAL = {Communications in Algebra},
    VOLUME = {10},
      YEAR = {1982},
    NUMBER = {15},
     PAGES = {1589--1668},
      ISSN = {0092-7872,1532-4125},
   MRCLASS = {18F25 (55P47)},
  MRNUMBER = {668580},
MRREVIEWER = {J.\ P.\ May},
       DOI = {10.1080/00927878208822794},
       URL = {https://doi.org/10.1080/00927878208822794},
}
\end{document}